\definecolor{qone}{RGB}{220,76,70}
\definecolor{qtwo}{RGB}{45,123,210}
\definecolor{qthree}{RGB}{246,196,68}
\definecolor{atomA}{RGB}{150,96,116}
\definecolor{atomB}{RGB}{238,136,74}
\definecolor{atomC}{RGB}{120,160,139}
\definecolor{atomD}{RGB}{246,196,68}
\definecolor{unusedcolor}{RGB}{185,185,185}
\definecolor{flowcolor}{RGB}{34,113,181}
\newtheorem{theorem}{Theorem}[section]
\newtheorem{lemma}[theorem]{Lemma}
\newtheorem{prop}[theorem]{Proposition}
\theoremstyle{definition}
\newtheorem{definition}[theorem]{Definition}
\theoremstyle{remark}
\newtheorem{remark}[theorem]{Remark}
\newtheorem{example}[theorem]{Example}
\numberwithin{equation}{section}
\newcommand{\Z}{\ensuremath{\mathbb{Z}}}
\newcommand{\R}{\ensuremath{\mathbb{R}}}
\newcommand{\mc}{\mathcal}
\newcommand{\ms}{\mathscr}
\DeclarePairedDelimiter\abs{\lvert}{\rvert}
\DeclarePairedDelimiter\cbrace\{\}
\DeclarePairedDelimiter\ha()
\newcommand{\hab}[1]{\bigl(#1\bigr)}
\newcommand{\cbraceb}[1]{\bigl\{#1\bigr\}}
\newcommand{\bcup}[1]{{\textstyle\bigcup\limits_{#1}}}
\newcommand{\bcap}[1]{{\textstyle\bigcap\limits_{#1}}}
\newcommand{\has}[1]{\Bigl(#1\Bigr)}
\newcommand{\cbraces}[1]{\Bigl\{#1\Bigr\}}
\newcommand{\dd}{\hspace{2pt}\mathrm{d}}
\newcommand{\ee}{\mathrm{e}}
\DeclareMathOperator{\ind}{\mathbf{1}}
\newcommand{\m}[1]{\ensuremath{\mu\left(#1\right)}}
\begin{document}
\title[Optimization algorithms for Carleson and sparse collections]{Optimization algorithms for Carleson and sparse collections of sets}

\author[E.A. Honig]{Eline A. Honig}
\address[E.A. Honig]{Delft Institute of Applied Mathematics\\
	Delft University of Technology \\ P.O. Box 5031\\ 2600 GA Delft\\The
	Netherlands}
\email{e.a.honig@tudelft.nl}
\author[E. Lorist]{Emiel Lorist}
\address[E. Lorist]{Delft Institute of Applied Mathematics\\
	Delft University of Technology \\ P.O. Box 5031\\ 2600 GA Delft\\The
	Netherlands}
\email{e.lorist@tudelft.nl}

\thanks{The authors would like to thank Dion Gijswijt for his insightful comments, in particular his suggestion to use submodular function minimization. Moreover, the second author would like to thank Timo H\"anninen and Guillermo Rey for fruitful discussions on sparse and Carleson collections and Floris Roodenburg for careful proofreading a draft of this paper.
The second author was partially financed by the Dutch Research Council (NWO) on the project ``The sparse revolution for stochastic partial differential equations'' with project number \href{https://doi.org/10.61686/ZGRMR99948}{VI.Veni.242.057}.}
	
\begin{abstract}
Carleson and sparse collections of sets play a central role in dyadic harmonic analysis. We employ methods from optimization theory to study such collections.

 First, we present a strongly polynomial algorithm to compute the Carleson constant of a collection of sets, improving on the recent approximation algorithm of Rey \cite{Re22}. Our algorithm is based on submodular function minimization.

Second, we provide an algorithm showing that any Carleson collection is sparse, achieving optimal dependence of the respective constants and thus providing a constructive proof of a result of H\"anninen \cite{Haen18}.
Our key insight is a reformulation of the duality between the Carleson condition and sparseness in terms of the duality between the maximum flow and the minimum cut in a weighted directed graph.
\end{abstract}
	
\keywords{Carleson collection, sparse, submodular function, max-flow min-cut}
\subjclass[2020]{Primary: 42B25, 90C27, Secondary: 05C21}

% 05C21	    Flows in graphs
% 42B20  	Singular and oscillatory integrals (Calderón-Zygmund, etc.)
% 90C27	    Combinatorial optimization

\maketitle

\section{Introduction}
Let $(\Omega,\Sigma,\mu)$ be a measure space and $\mc{F}$ a collection of sets in $\Sigma$, where we use the word \emph{collection} for an unordered sequence of sets that allows for repeated elements. We call $\mc{F}$ \emph{$\eta$-sparse} for $\eta\in (0,1]$ if for every $Q \in \mathcal{F}$ there exists a measurable subset $E_Q \subseteq Q$ such that $\mu(E_Q) \geq \eta \mu(Q)$ and the sets $\{E_Q\}_{Q\in\mathcal{F}}$ are pairwise disjoint. The number $\eta$ provides a scale-invariant way to quantify the overlap of the sets in $\mc{F}$. In particular, a $1$-sparse collection is pairwise disjoint (up to $\mu$-null sets).

Closely related to $\eta$-sparseness is the $\Lambda$-Carleson condition. The collection $\mc{F}$ is said to satisfy the \emph{$\Lambda$-Carleson condition} for $\Lambda\geq 1$ if for every subcollection $\mathcal{A} \subseteq \mathcal{F}$ we have
    \begin{equation*}
        \sum_{Q\in\mathcal{A}}\mu\ha{Q} \leq \Lambda \mu\hab{\bcup{Q\in\mathcal{A}}Q}.
    \end{equation*}
Every $\eta$-sparse collection $\mc{F}$ satisfies the $\eta^{-1}$-Carleson condition, as for any subcollection $\mathcal{A} \subseteq \mathcal{F}$ we have
\begin{equation*}
    \sum_{Q\in\mathcal{A}}\m{Q} \leq \frac{1}{\eta} \sum_{Q\in\mathcal{A}}\m{E_Q} \leq \frac{1}{\eta} \mu\hab{ \bcup{Q\in\mathcal{A}}Q},
\end{equation*}
where the last inequality holds because the sets $\{E_Q\}_{Q\in\mathcal{F}}$ are pairwise disjoint.

In the converse direction, it is known in many cases that if $\mc{F}$ satisfies the $\Lambda$-Carleson condition, then $\mc{F}$ is $\frac1\Lambda$-sparse. For $\mc{F}$ consisting of dyadic cubes in $\R^d$ and $\mu$ a non-atomic, locally finite Borel measure, this was proven by Verbitsky in \cite[Corollary 2]{Ve96}, using a result of Dor \cite{Do75}. Verbitsky's proof was extended to general collections of Borel measurable subsets of $\R^d$ by H\"anninen \cite{Haen18}, see also \cite[Theorem 3.3]{Ba19}.

The equivalence between sparseness and the Carleson condition is fundamental in applications in harmonic analysis: the Carleson condition is often straightforward to verify, while sparseness is incredibly useful for obtaining sharp estimates. We refer to the survey \cite{Pe19c} and the references therein for an overview of the use of sparse collections in modern-day harmonic analysis, which has been dubbed ``The sparse revolution''.

\bigskip

The remarkably elegant Dor--H\"anninen--Verbitsky proof is based on duality and the Hahn--Banach separation theorem and is thus non-constructive, even if the collection $\mc{F}$ is finite. In stark contrast, if $\mc{F}$ is a finite collection of dyadic cubes in $\R^d$, one can start by choosing any subset $E_Q \subseteq Q$ with measure $\frac1\Lambda\mu(Q)$ for all minimal cubes in $\mc{F}$. Afterwards, one can go up ``layer by layer'', choosing subsets $E_Q$ of measure $\frac{1}{\Lambda}\mu(Q)$ in $Q\setminus \bigcup_{R\in \mc{F}:R\subsetneq Q}E_R$. This will always be possible since
$$
\mu\has{\bigcup_{R\in \mc{F}:R\subsetneq Q}E_R} = \tfrac{1}{\Lambda} \sum_{R\in \mc{F}:R\subseteq Q}\mu(R)-\tfrac{1}{\Lambda} \mu(Q)\leq    (1-\tfrac{1}{\Lambda})\mu(Q),
$$
where the
 $\Lambda$-Carleson condition was used in the second step. These $E_Q$'s will be disjoint by the nestedness property of dyadic cubes. Hence, one can construct the $E_Q$'s using a greedy algorithm. This argument was extended to infinite collections of dyadic cubes in $\R^d$ with the Lebesgue measure by Lerner and Nazarov \cite[Lemma 6.3]{LN15} and subsequently with any non-atomic, locally finite Borel measure $\mu$ by Cascante and Ortega \cite[Theorem 4.3]{CO17}.

The constructive proof for dyadic cubes does not generalize to more general collections $\mc{F}$, not even to axis-parallel dyadic rectangles in two dimensions. This is due to the fact that the Carleson condition is non-local, as shown by Carleson in \cite{ca74}.
The first constructive algorithm beyond dyadic cubes was recently obtained by Rey \cite{Re22}, who  proposed a greedy \emph{approximation} algorithm to construct the sets $\cbrace{E_Q}_{Q \in \mc{F}}$ for a finite $\Lambda$-Carleson collection in $\R^d$. This algorithm constructs sets $\cbrace{E_Q}_{Q \in \mc{F}}$ such that $$\mu(E_Q) \geq \frac{c}{\Lambda \log(\ee+\Lambda)} \mu(Q)$$ for some absolute constant $0<c<1$. Under a geometric condition on $\mc{F}$, the logarithmic loss can be removed, recovering the optimal, non-constructive result from \cite{Haen18} up to an absolute constant. The algorithm of Rey can also be used to approximate the optimal Carleson condition constant of a finite collection $\mc{F}$ in polynomial time.

\bigskip

Given a measure space $(\Omega,\Sigma,\mu)$ and a finite collection $\mc{F}$ of sets in $\Sigma$, the goal of this article is twofold:
\begin{enumerate}[(i)]
  \item \label{it:goal1}  We provide a strongly polynomial algorithm to compute the minimum $\Lambda \geq 1$ such that $\mc{F}$ satisfies the $\Lambda$-Carleson condition. This will be done in Section \ref{sec:carleson}, using the fact that the Carleson condition is submodular.
  \item Given that $\mc{F}$ satisfies the $\Lambda$-Carleson condition, we  construct sets $\cbrace{E_Q}_{Q \in \mc{F}}$ such that $\mu(E_Q) \geq \frac{1}{\Lambda} \mu(Q)$, i.e. show that $\mc{F}$ is $\frac1\Lambda$-sparse. To do so, we will reformulate the duality between the Carleson condition and sparseness in terms of the duality between the maximum flow and the minimum cut in a weighted directed graph, see Section~\ref{sec:sparse}.
\end{enumerate}
Compared to \cite{Re22}, we remove the geometric assumption on $\mc{F}$ and achieve $c=1$, i.e., we calculate instead of approximate the Carleson constant and our sets $\cbrace{E_Q}_{Q \in \mc{F}}$ are optimal. This provides a constructive proof of the result from \cite{Haen18} for finite collections $\mc{F}$. We will make some remarks on countable collections $\mc{F}$ in Remark \ref{rem:infinite}.  Moreover, we extend the theory from $\R^d$ with a locally finite Borel measure to a general measure space $(\Omega,\Sigma,\mu)$, which we fix throughout the paper.

\begin{remark}
One can replace $\cbrace{\mu(Q)}_{Q \in \mc{F}}$ by an arbitrary sequence of nonnegative real numbers  $\cbrace{\lambda_Q}_{Q \in \mc{F}}$ in the definition of the Carleson condition and  sparseness, as done in, e.g., \cite{CO17, Haen18,Ve96}. Our proofs carry over verbatim to this generality.
\end{remark}

\section{The optimal Carleson constant}\label{sec:carleson}
Let $\mc{F}$ be a finite, nonempty collection of sets in the $\sigma$-algebra $\Sigma$ with $0<\mu(Q)<\infty$ for all $Q \in \mc{F}$ to avoid trivialities. Suppose we want to calculate the minimum $\Lambda \geq 1$ such that $\mc{F}$ satisfies the $\Lambda$-Carleson condition, i.e., calculate
\begin{equation*}
  \Lambda_{\mc{F}}:= \sup\cbraces{\frac{\sum_{Q\in\mathcal{A}}\mu\ha{Q}}{\mu\hab{\textstyle\bigcup_{Q\in\mathcal{A}}Q}}
:\mc{A} \subseteq \mc{F} \text{ nonempty}}.
\end{equation*}
A naive way is to use brute force and evaluate the right-hand side for all $2^{\abs{\mc{F}}}$ subcollections of $\mc{F}$, which is exponential in $\abs{\mc{F}}$  and hence impractical for large collections. We would prefer to have an algorithm that is polynomial in $\abs{\mc{F}}$ and furthermore does not depend on the size of e.g. $\mu(Q)$ for $Q \in \mc{F}$.

This leads us to the question how to represent the input of such an algorithm. Indeed, to calculate $\Lambda_{\mc{F}}$ we need to have access to the measure of $\bigcup_{Q\in\mathcal{A}}Q$ for $\mc{A} \subseteq \mc{F}$. If we explicitly list all these values, only reading the input would already take exponential time. To avoid this, we assume to have an \emph{evaluation oracle} available:
\begin{enumerate}[($\mathrm{O}_1$)]
  \item : Given a subcollection $\mc{A}\subseteq \mc{F}$, return $\mu\hab{\bigcup_{Q\in\mathcal{A}}Q}$. \label{O1}
\end{enumerate}
We aim for an algorithm with a polynomial number of calls to \ref{O1} and a polynomial amount of other computational steps. The time needed for one call to the oracle \ref{O1} is denoted by $\mathrm{EO}_1$.

Our algorithm will be based on \emph{submodular function minimization}. Let us quickly recall the definition of submodularity. We denote the powerset of $\mc{F}$ by $\mc{P}(\mc{F})$.
\begin{definition}
A set function $f \colon \mc{P}(\mc{F})\to \R$ is called \emph{submodular} if for every $\mc{A},\mc{B} \subseteq \mc{F}$ we have
\begin{align*}
  f(\mc{A})+f(\mc{B}) \geq f(\mc{A}\cup \mc{B} ) +f(\mc{A}\cap \mc{B} ).
\end{align*}
\end{definition}
There is an extensive literature on the minimization of submodular functions, i.e., finding $\mc{A} \subseteq \mc{F}$ such that $f(\mc{A}) = \min_{\mc{B} \subseteq \mc{F}} f(\mc{B})$. In particular, there exist strongly polynomial algorithms to find such minimizers, see the surveys \cite{Iw08,mc05}, the more recent \cite{JLSZ24,Ji22} and the references therein.

The submodular function we will use  is
 $$
  f_\Lambda(\mc{A}) := \Lambda \cdot \mu\hab{\bcup{Q\in\mathcal{A}}Q} - \sum_{Q \in \mc{A}} \mu(Q), \qquad \Lambda \geq 1.
  $$

\begin{lemma}\label{lemm:submodular}
 The function $f_\Lambda\colon \mc{P}(\mc{F}) \to \R$ is submodular for $\Lambda \geq 1$.
\end{lemma}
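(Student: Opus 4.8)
The plan is to exhibit $f_\Lambda$ as a nonnegative combination of a submodular set function and a modular one. Set $g(\mc{A}) := \mu\bigl(\bigcup_{Q\in\mc{A}} Q\bigr)$ and $h(\mc{A}) := \sum_{Q\in\mc{A}}\mu(Q)$, so that $f_\Lambda = \Lambda\, g - h$. Since the submodular functions on $\mc{P}(\mc{F})$ form a cone closed under addition, and since every modular function is in particular submodular, it suffices to check that $g$ is submodular and that $h$ is modular: then $-h$ is modular, hence submodular, while $\Lambda g$ is submodular because $\Lambda\ge 1>0$, and $f_\Lambda = \Lambda g + (-h)$ is a sum of submodular functions.

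First I would dispose of $h$, whose modularity (in fact $h(\mc{A})+h(\mc{B}) = h(\mc{A}\cup\mc{B})+h(\mc{A}\cap\mc{B})$) is just the pointwise identity $\mathbf{1}_{\mc{A}} + \mathbf{1}_{\mc{B}} = \mathbf{1}_{\mc{A}\cup\mc{B}} + \mathbf{1}_{\mc{A}\cap\mc{B}}$ tested against the weights $\mu(Q)$, $Q\in\mc{F}$. For $g$, fix $\mc{A},\mc{B}\subseteq\mc{F}$ and write $U_{\mc{C}} := \bigcup_{Q\in\mc{C}} Q$. One has $U_{\mc{A}\cup\mc{B}} = U_{\mc{A}}\cup U_{\mc{B}}$, and since every $Q\in\mc{A}\cap\mc{B}$ lies in both $U_{\mc{A}}$ and $U_{\mc{B}}$, also $U_{\mc{A}\cap\mc{B}}\subseteq U_{\mc{A}}\cap U_{\mc{B}}$. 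Applying monotonicity of $\mu$ to this inclusion and then the inclusion--exclusion identity $\mu(U_{\mc{A}}\cup U_{\mc{B}}) + \mu(U_{\mc{A}}\cap U_{\mc{B}}) = \mu(U_{\mc{A}}) + \mu(U_{\mc{B}})$, which is legitimate because $\mu(U_{\mc{A}}),\mu(U_{\mc{B}}) \le \sum_{Q\in\mc{F}}\mu(Q) < \infty$ by the standing hypothesis, yields
\[
g(\mc{A}\cup\mc{B}) + g(\mc{A}\cap\mc{B}) \le \mu(U_{\mc{A}}\cup U_{\mc{B}}) + \mu(U_{\mc{A}}\cap U_{\mc{B}}) = g(\mc{A}) + g(\mc{B}),
\]
which is precisely the submodularity of $g$.

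I do not expect a genuine obstacle: the argument reduces in one step to the classical facts that the ``coverage'' function $\mc{A}\mapsto\mu\bigl(\bigcup_{Q\in\mc{A}}Q\bigr)$ is submodular and that weighted cardinality functions are modular. The only two points worth a remark are the use of finiteness of the $\mu(Q)$ (so that inclusion--exclusion is valid), guaranteed here by $\mu(Q)<\infty$ and $\abs{\mc{F}}<\infty$, and the convention that $\mc{F}$ is treated as an indexed family, so that repeated sets are counted with multiplicity in $h$ but collapse under the union in $g$.
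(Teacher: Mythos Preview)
Your proof is correct and follows essentially the same approach as the paper: both arguments verify the submodularity of the coverage function $g(\mc{A})=\mu\bigl(\bigcup_{Q\in\mc{A}}Q\bigr)$ via inclusion--exclusion together with $U_{\mc{A}\cap\mc{B}}\subseteq U_{\mc{A}}\cap U_{\mc{B}}$, and the modularity of $h(\mc{A})=\sum_{Q\in\mc{A}}\mu(Q)$, then combine. Your additional framing in terms of the cone of submodular functions and your explicit remarks on finiteness and multiplicities are fine but not needed beyond what the paper does implicitly.
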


\begin{proof}
  Let $\mc{A},\mc{B} \subseteq \mc{F}$. Then we have
  \begin{align*}
    \mu\hab{\bcup{Q\in\mathcal{A}}Q} + \mu\hab{\bcup{Q\in\mathcal{B}}Q} &= \mu\has{\hab{\bcup{Q\in \mathcal{A}}Q}\cup \hab{\bcup{Q\in\mathcal{B}}Q}}+ \mu\has{\hab{\bcup{Q\in\mathcal{A}}Q} \cap \hab{\bcup{Q\in\mathcal{B}}Q}}
     \\ &\geq \mu\hab{\bcup{Q\in\mathcal{A}\cup \mathcal{B}}Q}+ \mu\has{{\bcup{Q\in\mathcal{A} \cap \mathcal{B}}Q}}.
  \end{align*}
  Moreover, we trivially have
  $$
  \sum_{Q \in \mc{A}} \mu(Q)+\sum_{Q \in \mc{B}} \mu(Q) = \sum_{Q \in \mc{A}\cup \mc{B}} \mu(Q)+\sum_{Q \in \mc{A}\cap \mc{B}} \mu(Q),
  $$
  which proves the lemma.
\end{proof}

\begin{algorithm} \label{alg:Carleson}
\DontPrintSemicolon
    \caption{Compute $\Lambda_{\mc{F}}$.}
    \SetKwInOut{Input}{Input}\SetKwInOut{Output}{Output}
    \Input{A finite, nonempty collection $\mc{F}$ of sets in $\Sigma$.}
    \Output{The constant $\Lambda$ and the collection $\mc{A}$.}
    {
     Set $\mc{A}_0 = \mc{F}$.\;
    Set $r = -1$.\;
    Set $j=0$.\;
    }
    \While{$r<0$ \label{algorithmline: for Q in F findEQ}
    }{
    Update $j \leftarrow j+1$.\;
    Set $\Lambda_{j} = \frac{\sum_{Q\in\mathcal{A}_{j-1}}\mu\ha{Q}}{\mu\hab{\bigcup_{Q\in\mathcal{A}_{j-1}}Q}}.$ \label{line:defLambda}\;
       Let $\mc{A}_{j}$ be a non-empty minimizer for  $\min_{\mc{B}\subseteq \mc{A}_{j-1}} f_{\Lambda_{j}}(\mc{B})$.\label{line:minimzer}\;
       Update $r \leftarrow f_{\Lambda_{j}}(\mc{A}_{j})$.
    }
    Set $\Lambda = \Lambda_j$.\;
    Set $\mc{A} = \mc{A}_j$.\;
\end{algorithm}

The algorithm to compute $\Lambda_{\mc{F}}$ we propose is given in Algorithm \ref{alg:Carleson}. The idea is to iteratively find the collection $\mc{A} \subseteq \mc{F}$ violating the $\Lambda$-Carleson condition the most and then update $\Lambda$ accordingly. We force the violating collections to be shrinking, ensuring that Algorithm \ref{alg:Carleson} does at most $\abs{\mc{F}}$ iterations of the while loop.

\begin{theorem}\label{thm:Lambda}
 Let $\mc{F}$ be a finite, nonempty collection in $\Sigma$ with $0<\mu(Q)<\infty$ for all $Q \in \mc{F}$. The output $(\Lambda,\mc{A})$ of Algorithm \ref{alg:Carleson} satisfies
\begin{equation}\label{eq:toprove}
  \Lambda_{\mc{F}} = \Lambda = \frac{\sum_{Q\in\mathcal{A}}\mu\ha{Q}}{\mu\hab{\textstyle\bigcup_{Q\in\mathcal{A}}Q}},
\end{equation}
 Algorithm \ref{alg:Carleson} is strongly polynomial, with a runtime of $$O(n^4 \log n \cdot \mathrm{EO}_1+ n^5 \log n),$$ where $n = \abs{\mc{F}}.$
\end{theorem}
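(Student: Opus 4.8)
The plan is to prove three things about Algorithm \ref{alg:Carleson}: that it terminates after at most $n = \abs{\mc{F}}$ iterations of the while loop, that the output $\Lambda$ equals $\Lambda_{\mc{F}}$ together with a witnessing subcollection $\mc{A}$, and that the total runtime has the claimed bound. The central structural fact driving everything is the equivalence, for a fixed nonempty $\mc{B} \subseteq \mc{F}$ and a fixed threshold $\Lambda$, between the inequality $\frac{\sum_{Q \in \mc{B}} \mu(Q)}{\mu(\bigcup_{Q \in \mc{B}} Q)} \geq \Lambda$ and the inequality $f_\Lambda(\mc{B}) \leq 0$; likewise $> \Lambda$ corresponds to $f_\Lambda(\mc{B}) < 0$. (Here we use $\mu(\bigcup_{Q \in \mc{B}}Q) > 0$, which holds since $\mc{B}$ is nonempty and each $\mu(Q) > 0$.) Thus $\Lambda_{\mc{F}}$ is characterized as the largest $\Lambda$ for which $\min_{\mc{B} \subseteq \mc{F},\, \mc{B} \neq \emptyset} f_\Lambda(\mc{B}) \leq 0$, and a minimizing $\mc{B}$ at $\Lambda = \Lambda_{\mc{F}}$ is exactly an extremal subcollection.

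First I would analyze one iteration. At the start of iteration $j$ we have a nonempty $\mc{A}_{j-1}$; the algorithm sets $\Lambda_j$ to its Carleson ratio, so $f_{\Lambda_j}(\mc{A}_{j-1}) = 0$. Hence $\min_{\mc{B} \subseteq \mc{A}_{j-1}} f_{\Lambda_j}(\mc{B}) \leq 0$, and because $f_{\Lambda_j}(\emptyset) = 0$ as well, there is always a nonempty minimizer $\mc{A}_j$ — indeed if every minimizer were the empty set we could still take $\mc{A}_j = \mc{A}_{j-1}$, which also attains value $0$; so line \ref{line:minimzer} is well-posed. Now $r = f_{\Lambda_j}(\mc{A}_j) \leq 0$. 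There are two cases. If $r = 0$, the loop terminates; I claim $\Lambda_j = \Lambda_{\mc{F}}$. On the one hand $\mc{A}_{j-1} \subseteq \mc{F}$ realizes ratio $\Lambda_j$, so $\Lambda_{\mc{F}} \geq \Lambda_j$. On the other hand, I need $f_{\Lambda_j}(\mc{B}) \geq 0$ for all nonempty $\mc{B} \subseteq \mc{F}$, not just $\mc{B} \subseteq \mc{A}_{j-1}$ — this requires the monotonicity argument below. If $r < 0$, then $\Lambda_{j+1} = \frac{\sum_{Q \in \mc{A}_j}\mu(Q)}{\mu(\bigcup_{Q \in \mc{A}_j}Q)} > \Lambda_j$ strictly, and $\mc{A}_j \subsetneq \mc{A}_{j-1}$ strictly: $\mc{A}_j \subseteq \mc{A}_{j-1}$ by construction, and $\mc{A}_j \neq \mc{A}_{j-1}$ since $f_{\Lambda_j}(\mc{A}_{j-1}) = 0 \neq r$. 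This strict shrinking bounds the number of iterations by $n$.

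The main obstacle — and the key lemma I would isolate — is showing that the minimization can be safely restricted to subcollections of $\mc{A}_{j-1}$ without ever missing the true extremal subcollection of $\mc{F}$. Concretely: I claim that for every $j$ and every nonempty $\mc{C} \subseteq \mc{F}$, its Carleson ratio is at most $\Lambda_j$, i.e. $f_{\Lambda_j}(\mc{C}) \geq 0$ would be \emph{false in general} — so instead the correct invariant is that any $\mc{C}$ with ratio $> \Lambda_{j}$ must satisfy $\mc{C} \subseteq \mc{A}_{j-1}$. I would prove by induction on $j$ that if $\mc{C} \subseteq \mc{F}$ has $f_{\Lambda_j}(\mc{C}) < 0$ then $\mc{C} \subseteq \mc{A}_{j-1}$; equivalently, $\mc{A}_{j-1} \supseteq \{\,$all sets lying in some nonempty subcollection with ratio exceeding $\Lambda_j\,\}$. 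The base case $\mc{A}_0 = \mc{F}$ is trivial. For the inductive step, suppose $f_{\Lambda_{j+1}}(\mc{C}) < 0$ for some nonempty $\mc{C} \subseteq \mc{F}$; since $\Lambda_{j+1} > \Lambda_j$ this gives $f_{\Lambda_j}(\mc{C}) < 0$, so $\mc{C} \subseteq \mc{A}_{j-1}$ by induction. Now use submodularity (Lemma \ref{lemm:submodular}) applied to $\mc{C}$ and $\mc{A}_j$ inside $\mc{A}_{j-1}$: $f_{\Lambda_{j+1}}(\mc{C} \cup \mc{A}_j) + f_{\Lambda_{j+1}}(\mc{C} \cap \mc{A}_j) \leq f_{\Lambda_{j+1}}(\mc{C}) + f_{\Lambda_{j+1}}(\mc{A}_j)$, and $f_{\Lambda_{j+1}}(\mc{A}_j) = 0$ by definition of $\Lambda_{j+1}$, while $\mc{A}_j$ minimizes $f_{\Lambda_{j+1}}$ over subsets of $\mc{A}_{j-1}$ so $f_{\Lambda_{j+1}}(\mc{C} \cap \mc{A}_j) \geq f_{\Lambda_{j+1}}(\mc{A}_j) = 0$ — wait, this needs $\mc{A}_j$ to minimize $f_{\Lambda_{j+1}}$, not $f_{\Lambda_j}$; the fix is that $\mc{A}_j$ was chosen as a minimizer of $f_{\Lambda_j}$, so I instead argue that $\mc{C} \cup \mc{A}_j \subseteq \mc{A}_{j-1}$ together with $f_{\Lambda_j}(\mc{C} \cup \mc{A}_j) \leq f_{\Lambda_j}(\mc{C}) + f_{\Lambda_j}(\mc{A}_j) - f_{\Lambda_j}(\mc{C}\cap \mc{A}_j) \leq f_{\Lambda_j}(\mc{C}) < 0$ (using $f_{\Lambda_j}(\mc{A}_j) \leq f_{\Lambda_j}(\mc{C} \cap \mc{A}_j)$ by minimality of $\mc{A}_j$ and $f_{\Lambda_j}(\mc{C}) \le f_{\Lambda_j}(\mc C) - f_{\Lambda_j}(\mc A_j)+ f_{\Lambda_j}(\mc C \cap \mc A_j)$ rearranged), forcing $\mc{C} \cup \mc{A}_j$ to have ratio $> \Lambda_j$, hence $\mc{C} \cup \mc{A}_j = \mc{A}_j$ by the maximality built into the choice of $\Lambda_{j}$-minimizer, i.e. $\mc{C} \subseteq \mc{A}_j$. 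Running this invariant up to termination: when $r = 0$ at step $j$, no nonempty $\mc{C} \subseteq \mc{F}$ has ratio $> \Lambda_j$ (such a $\mc{C}$ would satisfy $\mc{C} \subseteq \mc{A}_{j-1}$ and $f_{\Lambda_j}(\mc{C}) < 0$, contradicting $r = \min_{\mc{B} \subseteq \mc{A}_{j-1}} f_{\Lambda_j}(\mc{B}) = 0$), so $\Lambda_{\mc{F}} = \Lambda_j = \Lambda$ and $\mc{A} = \mc{A}_{j-1}$ (or $\mc{A}_j$, whichever realizes the ratio) is a witness, giving \eqref{eq:toprove}. To make the ``maximality built into the choice'' rigorous I would, in line \ref{line:minimzer}, specify that $\mc{A}_j$ is taken to be a minimizer of maximal cardinality (or the union of all minimizers, which is again a minimizer since $f_{\Lambda_j}$ restricted to minimizers is submodular and its min-set is closed under union); this is the one genuinely delicate point and I would state it as part of the algorithm.

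Finally, for the complexity: each iteration performs one submodular function minimization over a ground set of size at most $n$, plus $O(n)$ oracle calls and arithmetic to form $\Lambda_j$ and evaluate $f_{\Lambda_j}(\mc{A}_j)$. Invoking a strongly polynomial SFM routine — e.g. the Lee–Sidford–Wong or the more recent algorithms surveyed in \cite{Ji22, JLSZ24}, which run in $O(n^3 \log n)$ calls to the function-evaluation oracle plus $O(n^4 \log n)$ additional arithmetic operations, where here one evaluation of $f_{\Lambda_j}$ costs one call to \ref{O1} plus $O(n)$ arithmetic — each iteration costs $O(n^3 \log n \cdot \mathrm{EO}_1 + n^4 \log n)$. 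Multiplying by the at most $n$ iterations yields the stated $O(n^4 \log n \cdot \mathrm{EO}_1 + n^5 \log n)$. I would double-check that the SFM oracle model matches \ref{O1} (it does: $f_{\Lambda_j}(\mc{B})$ is computable from $\mu(\bigcup_{Q\in\mc B}Q)$ and the precomputed values $\mu(Q)$) and that the strongly polynomial bound I cite is stated in exactly this oracle-call-plus-arithmetic form in the reference I use; the rest is bookkeeping.
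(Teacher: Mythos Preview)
Your overall architecture is right --- termination via strict shrinking of the $\mc{A}_j$'s, correctness via an invariant proved inductively using submodularity, and the runtime by multiplying the number of iterations by the cost of one SFM call. The runtime discussion matches the paper essentially verbatim.

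The gap is in the inductive step of your invariant. You establish
\[
f_{\Lambda_j}(\mc{C} \cup \mc{A}_j) \leq f_{\Lambda_j}(\mc{C}) < 0,
\]
and then want to conclude $\mc{C} \cup \mc{A}_j = \mc{A}_j$ by ``maximality built into the choice of $\Lambda_j$-minimizer''. But even if you modify line~\ref{line:minimzer} to select the inclusion-maximal minimizer (the union of all minimizers), this does not help: you would need $\mc{C}\cup\mc{A}_j$ to be a \emph{minimizer} of $f_{\Lambda_j}$, i.e.\ $f_{\Lambda_j}(\mc{C}\cup\mc{A}_j) = f_{\Lambda_j}(\mc{A}_j)$, whereas you have only shown $f_{\Lambda_j}(\mc{A}_j) \leq f_{\Lambda_j}(\mc{C}\cup\mc{A}_j) \leq f_{\Lambda_j}(\mc{C})$, and the left inequality can be strict. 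Alternatively, if ``maximal'' is meant as ``the largest $\mc{B} \subseteq \mc{A}_{j-1}$ with $f_{\Lambda_j}(\mc{B}) < 0$'', such a set need not exist, since submodularity does not make $\{f_{\Lambda_j} < 0\}$ closed under union. So the patch neither fits the algorithm as stated nor closes the gap.

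The paper avoids this by proving a different invariant: \emph{$\mc{A}_j$ is a minimizer of $f_{\Lambda_j}$ over all of $\mc{F}$}, not just over $\mc{A}_{j-1}$. For the inductive step one takes a global minimizer $\mc{B}'$ of $f_{\Lambda_{j+1}}$, applies submodularity at level $\Lambda_{j+1}$ to the pair $(\mc{A}_j,\mc{B}')$, and then rewrites $f_{\Lambda_{j+1}}(\mc{A}_j) - f_{\Lambda_{j+1}}(\mc{A}_j\cup\mc{B}')$ as $f_{\Lambda_j}(\mc{A}_j) - f_{\Lambda_j}(\mc{A}_j\cup\mc{B}')$ minus the nonnegative term $(\Lambda_{j+1}-\Lambda_j)\bigl(\mu(\bigcup_{\mc{A}_j\cup\mc{B}'}) - \mu(\bigcup_{\mc{A}_j})\bigr)$. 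The induction hypothesis makes $f_{\Lambda_j}(\mc{A}_j) - f_{\Lambda_j}(\mc{A}_j\cup\mc{B}') \leq 0$, so $f_{\Lambda_{j+1}}(\mc{A}_j\cap\mc{B}') \leq f_{\Lambda_{j+1}}(\mc{B}')$ and the global minimum of $f_{\Lambda_{j+1}}$ is attained inside $\mc{A}_j$. This works for \emph{any} nonempty minimizer in line~\ref{line:minimzer}, so no modification of the algorithm is needed. The extra ingredient you were missing is precisely this use of the monotonicity of $\Lambda \mapsto f_\Lambda(\mc{B}) - f_\Lambda(\mc{A})$ for $\mc{A} \subseteq \mc{B}$, which lets you transfer the minimality of $\mc{A}_j$ from level $\Lambda_j$ to level $\Lambda_{j+1}$.
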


\begin{proof}
By line \ref{line:defLambda} of  Algorithm \ref{alg:Carleson}, we have  $f_{\Lambda_{j}}(\mc{A}_{j-1})=0$ for all $j\geq 1$.  Therefore, if $\mc{A}_{j}=\mc{A}_{j-1}$ for some $j\geq 1$, we have $r=0$ and the while loop terminates. Thus, since $\mc{A}_{j} \subseteq \mc{A}_{j-1}$ by construction, the while loop is iterated at most $n$ times. Denote the number of iterations by $m$. Moreover, $f_{\Lambda_{j}}(\mc{A}_{j-1})=0$ also ensures that we can always take $\mc{A}_j$ nonempty.

Let $(\Lambda,\mc{A})$ be the output of  Algorithm \ref{alg:Carleson}. The second equality in \eqref{eq:toprove} follows directly from the fact  that $\mc{A}$ is a nonempty collection satisfying $f_{\Lambda}(\mc{A}) = r =0$  when the algorithm terminates. The first equality in \eqref{eq:toprove} is equivalent to
\begin{equation}\label{eq:toshow}
  \min_{\mc{B}\subseteq \mc{F}} f_{\Lambda}(\mc{B}) = 0.
\end{equation}
We claim that $\mc{A}_j$ is a minimizer for $\min_{\mc{B}\subseteq \mc{F}} f_{\Lambda_{j}}(\mc{B})$ for all $1\leq j \leq m$, which for $j=m$ implies \eqref{eq:toshow}, since $f_{\Lambda}(\mc{A}) =f_{\Lambda_m}(\mc{A}_m) = r =0$ when the algorithm terminates.

We prove the claim inductively. The case $j=1$ is clear since $\mc{A}_0=\mc{F}$. Suppose that the claim holds for some $j\geq 1$.  By line \ref{line:minimzer} of  Algorithm \ref{alg:Carleson} and  $f_{\Lambda_{j}}(\mc{A}_{j-1})=0$ we  have
$$
\Lambda_j \cdot \mu\hab{\textstyle\bigcup_{Q\in\mathcal{A}_{j}}Q} - \sum_{Q\in\mathcal{A}_{j}}\mu\ha{Q} =f_{\Lambda_j}(\mc{A}_{j})=  \min_{\mc{B}\subseteq \mc{A}_{j-1}} f_{\Lambda_{j}}(\mc{B})\leq0.
$$
It follows that
\begin{equation}\label{eq:Lambdaincreasing}
\Lambda_{j+1} = \frac{\sum_{Q\in\mathcal{A}_{j}}\mu\ha{Q}}{\mu\hab{\textstyle\bigcup_{Q\in\mathcal{A}_{j}}Q}} \geq \Lambda_j, \qquad j\geq 1.
\end{equation}
Let $\mc{B}'$ be a minimizer for $\min_{\mc{B}\subseteq \mc{F}} f_{\Lambda_{j+1}}(\mc{B})$. Then we have by the submodularity from Lemma \ref{lemm:submodular}, \eqref{eq:Lambdaincreasing} and the induction hypothesis
\begin{align*}
  f_{\Lambda_{j+1}}(\mc{A}_{j} \cap \mc{B}') - f_{\Lambda_{j+1}}(\mc{B}') &\leq f_{\Lambda_{j+1}}(\mc{A}_{j}) - f_{\Lambda_{j+1}}(\mc{A}_{j} \cup \mc{B}')\\  &=f_{\Lambda_{j}}(\mc{A}_{j}) -f_{\Lambda_{j}}(\mc{A}_{j} \cup \mc{B}') \\&\hspace{1cm}-(\Lambda_{j+1}-\Lambda_{j})\has{\mu\hab{\bcup{Q\in\mathcal{A}_{j}\cup\mc{B}'}Q}-\mu \hab{\bcup{Q\in\mathcal{A}_{j}}Q}}\\
   &\leq f_{\Lambda_{j}}(\mc{A}_{j}) -f_{\Lambda_{j}}(\mc{A}_{j} \cup \mc{B}') \leq 0.
\end{align*}
We conclude
\begin{align*}
  \min_{\mc{B}\subseteq \mc{F}} f_{\Lambda_{j+1}}(\mc{B}) = f_{\Lambda_{j+1}}(\mc{B}') \geq f_{\Lambda_{j+1}}(\mc{A}_{j} \cap \mc{B}')  \geq \min_{\mc{B}\subseteq \mc{A}_j} f_{\Lambda_{j+1}}(\mc{B}),
\end{align*}
which means that, since $\mc{A}_{j+1}$ is a minimizer for $\min_{\mc{B}\subseteq \mc{A}_j} f_{\Lambda_{j+1}}(\mc{B})$, $\mc{A}_{j+1}$ is a minimizer for $\min_{\mc{B}\subseteq \mc{F}} f_{\Lambda_{j+1}}(\mc{B})$ as well. This finishes the proof of the claim, and thus the proof of \eqref{eq:toprove}.

It remains to show the claimed runtime. As noted at the start of the proof, the while loop is executed at most $n$ times. The time one iteration takes is determined by line \ref{line:minimzer} of  Algorithm \ref{alg:Carleson}, the minimization of the submodular function $f_{\Lambda_j}$. Using the algorithm from \cite{JLSZ24}, a minimizer for a submodular function $f$ can be found with  runtime  $O(n^3 \log n \cdot \mathrm{EO}_{f}+ n^4 \log n)$, where $ \mathrm{EO}_{f}$ denotes the runtime of one function evaluation of $f$.
 If we initialize by storing the value of $\mu(Q)$ for all $Q \in \mc{F}$, a function evaluation of $f_{\Lambda_j}$ has a runtime of $O(\mathrm{EO}_1+n)$.
 Hence, the runtime of line \ref{line:minimzer} of  Algorithm~\ref{alg:Carleson} is $$O(n \cdot \mathrm{EO}_1+ n^3 \log n \cdot (\mathrm{EO}_1+n)+ n^4 \log n) = O(n^3 \log n \cdot \mathrm{EO}_1+ n^4 \log n).$$
This finishes the proof.
\end{proof}

\begin{remark}
  The runtime of Algorithm \ref{alg:Carleson} is, roughly speaking, $n$ times the best available algorithm for submodular function optimization on a set with $n$ elements. If one is  interested in the minimum number of  calls to the oracle \ref{O1}, one can replace the submodular function optimization  algorithm from \cite{JLSZ24} by one of the algorithms from \cite{Ji22}, yielding a runtime of Algorithm \ref{alg:Carleson} of either $O(n^4 \frac{\log \log n}{\log n} \cdot \mathrm{EO}_1+ n^9 \log n)$ or $O(n^3 \log n \cdot \mathrm{EO}_1+ 2^{O(n)})$.
\end{remark}

\section{The Carleson condition implies sparseness}\label{sec:sparse}
Having established an algorithm to compute the optimal $\Lambda$ such that $\mc{F}$ satisfies the $\Lambda$-Carleson condition, we now turn to our second goal: Constructing the  sets $\{E_Q\}_{Q\in\mathcal{F}}$ such that $\mu(E_Q) \geq \frac{1}{\Lambda} \mu(Q)$ for all $Q \in \mc{F}$. In particular, we  will provide a constructive proof that a finite collection satisfying the $\Lambda$-Carleson condition is $\frac{1}{\Lambda}$-sparse.

Without any assumptions on the measure space $(\Omega,\Sigma,\mu)$, not every $\Lambda$-Carleson condition is $\frac{1}{\Lambda}$-sparse. This is easily seen by taking $S=\cbrace{1,2}$, $\mu$ the counting measure and $\mc{F} = \mc{P}(S)$. This $\mc{F}$ satisfies the $2$-Carleson condition,  but is not $\frac12$-sparse. This problem cannot arise when $(\Omega,\Sigma,\mu)$ is non-atomic, which is assumed in, e.g., \cite{Haen18,Ve96}. Alternatively, to include the case of atomic measures, one can extend the notion of sparseness as in \cite[Definition 1]{Re22}. 

\begin{definition}\label{def:gensparse}
  We call $\mc{F}$ \emph{generalized $\eta$-sparse} for $\eta \in (0,1]$ if for every $Q \in \mc{F}$ there is a function $\varphi_Q\colon \Omega \to [0,1]$ such that
\begin{align*}
  \int_Q \varphi_Q\dd \mu &\geq \eta \mu(Q),\qquad Q \in \mc{F},\\
  \sum_{Q \in \mc{F}}\varphi_Q &\leq 1.
\end{align*}
\end{definition}
Any $\eta$-sparse collection $\mc{F}$ is generalized $\eta$-sparse by setting $\varphi_Q = \ind_{E_Q}$ for $Q \in \mc{F}$. Conversely, if $(\Omega,\Sigma,\mu)$ is non-atomic, a convexity argument as in \cite[Lemma 2.3]{Do75} shows that a generalized $\eta$-sparse collection is $\eta$-sparse.

\bigskip

Given a finite, nonempty collection $\mc{F}$ of sets in $\Sigma$ with $0<\mu(Q)<\infty$ for all $Q \in \mc{F}$ satisfying the $\Lambda$-Carleson condition, we will:
\begin{enumerate}[(i)]
  \item Provide an algorithm to construct the functions $\cbrace{\varphi_Q}_{Q \in \mc{F}}$  as in the definition of generalized $\Lambda^{-1}$-sparseness.
  \item Assuming that $(\Omega,\Sigma,\mu)$ is non-atomic, adapt our algorithm to construct the sets $\{E_Q\}_{Q\in\mathcal{F}}$ as in the definition of $\Lambda^{-1}$-sparseness.
\end{enumerate}
These algorithms will be presented in Subsections  \ref{subs:algorithms} and \ref{subs:algorithms2}. As preparation, we will first partition $\Omega$ according to the sets in $\mc{F}$ in Subsection \ref{subs:partition}, construct a weighted directed graph based on this partition in Subsection \ref{subs:graph} and provide some preliminaries on the maximum flow problem in Subsection \ref{subs:maxflow}.

\subsection{Partitioning \texorpdfstring{$\Omega$}{O}}
\label{subs:partition}
Without loss of generality, we may assume that $\Omega = \bigcup_{Q \in \mc{F}}Q$. For $\mc{A} \subseteq \mc{F}$, define
$$
E_{\mc{A}}:= \has{\bcap{Q \in \mc{A}}Q}\cap \has{\bcap{Q \in \mc{F}\setminus \mc{A}}Q^{\mathrm{c}}}.
$$
Since neither the $\Lambda$-Carleson condition nor $\eta$-sparseness is affected by  $\mu$-null sets, we may further assume that either $E_{\mc{A}}=\emptyset$ or $\mu(E_{\mc{A}})>0$ for all $\mc{A} \subseteq \mc{F}$. Define $$\mc{P}_0(\mc{F}):= \cbraceb{\mc{A} \in \mc{P}(\mc{F}):E_{\mc{A}} \neq \emptyset},$$ and note that the sets $E_{\mc{A}}$ for $\mc{A} \in \mc{P}_0(\mc{F})$ are the atoms of the $\sigma$-algebra generated by $\mc{F}$. Therefore, they form a partition in the sense that $E_{\mc{A}}$ and $E_{\mc{B}}$ are disjoint for $\mc{A} \neq \mc{B}$ and
\begin{align}\label{eq:partition}
  \bcup{Q \in \mc{A}} Q &= \bigcup_{\mc{B} \in \mc{P}_0({\mc{F}}):\mc{A}\cap \mc{B}\neq \emptyset} E_{\mc{B}}, && \mc{A} \in \mc{P}(\mc{F}).
\end{align}

\begin{example}\label{example:three-rectangles}
Consider $\Omega:= \R^2$ and let $\mu$ be the Lebesgue measure. Let $\mc{F}$ consist of the rectangles
\[
Q_1=[0,2]\times [0,\tfrac32],\qquad
Q_2=[1,3]\times [\tfrac12,2],\qquad
Q_3=[0,3]\times [0,\tfrac52],
\]
and write, for example, $E_{13}$ for the atom $E_{\cbrace{Q_1,Q_3}}$. The rectangles and the corresponding atoms are shown in Figure \ref{fig:rectangles}.
\begin{figure}[htbp]
  \centering
  \begin{tikzpicture}[scale=1.5, tick/.style={font=\scriptsize,black!100}]
    \draw[->,black!100] (-0.15,0) -- (3.35,0);
    \draw[->,black!100] (0,-0.15) -- (0,2.85);
    \foreach \x/\lab in {0/0,1/1,2/2,3/3}{\draw[black!100] (\x,0) -- (\x,-0.05) node[tick,below] {$\lab$};}
    \foreach \y/\lab in {0/0,1/{1},2/2}{\draw[black!100] (0,\y) -- (-0.05,\y) node[tick,left] {$\lab$};}
    \draw[fill=qthree!70, draw=black] (0,0) rectangle (3,2.5);
    \draw[fill=qone!70, draw=black] (0,0) rectangle (2,1.5);
    \draw[fill=qtwo!60, draw=black,  opacity=0.9] (1,0.5) rectangle (3,2);
    \draw[fill=none, draw=black] (1,0.5) rectangle (3,2);
    %\draw[draw=qthree!65!black] (0,0) rectangle (3,2.5);
    \node[anchor=south east] at (0.82,0.48) {$Q_1$};
    \node[anchor=north east] at (2.8,1.45) {$Q_2$};
    \node[anchor=south east] at (0.82,1.72) {$Q_3$};
  \end{tikzpicture}
  \hspace{0.8cm}
  \begin{tikzpicture}[scale=1.5, tick/.style={font=\scriptsize,black!100}]
    \draw[->,black!100] (-0.15,0) -- (3.35,0);
    \draw[->,black!100] (0,-0.15) -- (0,2.85);
    \foreach \x/\lab in {0/0,1/1,2/2,3/3}{\draw[black!100] (\x,0) -- (\x,-0.05) node[tick,below] {$\lab$};}
    \foreach \y/\lab in {0/0,1/{1},2/2}{\draw[black!100] (0,\y) -- (-0.05,\y) node[tick,left] {$\lab$};}
    \draw[fill=atomD!70, draw=black] (0,0) rectangle (3,2.5);
    \draw[fill=atomB!80, draw=black] (0,0) -- (2,0) -- (2,0.5) -- (1,0.5) -- (1,1.5) -- (0,1.5) -- cycle;
    \draw[fill=atomC!80, draw=black] (3,0.5) -- (3,2) -- (1,2) -- (1,1.5) -- (2,1.5) -- (2,0.5) -- cycle;
    \draw[fill=atomA!80, draw=black] (1,0.5) rectangle (2,1.5);
    \node at (1.5,1) {$E_{123}$};
    \node at (0.52,0.78) {$E_{13}$};
    \node at (2.5,1.25) {$E_{23}$};
    \node at (0.5,2.) {$E_3$};
    \node at (2.5,0.25) {$E_3$};
  \end{tikzpicture}
    \caption{The collection $\mathcal{F} = \cbrace{Q_1,Q_2,Q_3}$ and the sets $E_{\mc{A}}$ for $\mc{A} \in \mc{P}_0(\mc{F})$.}
 \label{fig:rectangles}
\end{figure}

The measures of the rectangles are $\mu\ha{Q_1}=\mu\ha{Q_2}=3$ and $\mu\ha{Q_3}=\frac{15}{2}$, while the measures of the atoms are
\[
\mu\ha{E_3}=\tfrac52,\qquad \mu\ha{E_{13}}=2,\qquad
\mu\ha{E_{23}}=2,\qquad \mu\ha{E_{123}}=1.
\]
Calculating all possibilities, we see that
\begin{equation*}
  \Lambda_{\mc{F}}= \frac{\mu(Q_1)+\mu(Q_2)+\mu(Q_3)}{\mu(Q_3)} = \frac{9}{5}<2.
\end{equation*}
In particular, $\mc{F}$ satisfies the $2$-Carleson condition, which we shall use below to showcase our algorithm.
\end{example}

%
%An example of a collection $\mc{F}$, consisting of three rectangles in $\R^2$, and the corresponding $E_{\mc{A}}$'s for $\mc{A} \in \mc{P}_0(\mc{F})$ is given in Figure \ref{fig:rectangles}.
%\begin{figure}[h!]   \centering
%        \begin{tikzpicture}
%             \draw[fill=gray!10, draw=black, opacity=1] (0,0) rectangle (4.7,3.7) node[anchor = south east] at (4.4, 0.2) {\( Q_3 \)};
%            \draw[fill=gray!70, draw=black, opacity=1] (0.35,0.3) rectangle (3.35,2.5) node[anchor = south west] at (0.4,0.35) {\( Q_2 \)};
%            \draw[fill=gray!40, draw=black, opacity=0.7] (0.85,1) rectangle (4.35,3.5) node[anchor = north east] at (3.0, 3.3) {\( Q_1 \)};
%        \end{tikzpicture} \hspace{2cm}
%        \begin{tikzpicture}
%             \draw[fill=gray!10, draw=black, opacity=1] (0,0) rectangle (4.7,3.7) node[anchor = center] at (4.03, 0.43) {\( E_{\{Q_3\}} \)};
%            \draw[fill=gray!70, draw=black, opacity=1] (0.35,0.3) rectangle (3.35,2.5) node[anchor = south west] at (0.35,0.3) {\( E_{\{Q_2, Q_3\}} \)};
%            \draw[fill=gray!40, draw=black, opacity=1] (0.85,1) rectangle (4.35,3.5) node[anchor = center] at (2.5, 3) {\( E_{\{Q_1, Q_3\}} \)};
%            \draw[fill=gray!25, draw=black, opacity=1] (0.85,1) rectangle (3.35,2.5) node[anchor = center] at (2.10, 1.75) {\( E_{\{Q_1, Q_2, Q_3\}} \)};
%        \end{tikzpicture}
%        \caption{The collection $\mathcal{F} = \cbrace{Q_1,Q_2,Q_3}$ and the sets $E_{\mc{A}}$ for $\mc{A} \in \mc{P}_0(\mc{F})$.}
%        \label{fig:rectangles}
%\end{figure}

The functions $\cbrace{\varphi_Q}_{Q \in \mc{F}}$ in the definition of generalized sparseness may need to take different values on each $E_\mc{A}$ for $\mc{A} \in \mc{P}_0(\mc{F})$. Similarly, the sets $\cbrace{E_Q}_{Q \in \mc{F}}$ in the definition of sparseness may require a specific amount of overlap with each of these sets. Thus, our algorithm will require the measure of  $E_\mc{A}$ for all $\mc{A} \in \mc{P}_0(\mc{F})$ as input. By the next lemma, these measures can be computed using the oracle \ref{O1}.

\begin{lemma}\label{lem:measureEA}
Let $\mc{F}$ be a finite collection of sets in $\Sigma$ for which $\mc{P}_0(\mc{F})$ is given.
 The measures $\mu(E_{\mc{A}})$ for all $\mc{A} \in \mc{P}_0(\mc{F})$ can be computed using \emph{\ref{O1}} with a runtime  $O(m\cdot \mathrm{EO}_1+ m^2)$, where  $m = \abs{\mc{P}_0(\mc{F})}$.
\end{lemma}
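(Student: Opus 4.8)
The plan is to express each measure $\mu(E_{\mc{A}})$, $\mc{A}\in\mc{P}_0(\mc{F})$, as the solution of a \emph{triangular} linear system whose right-hand side can be read off from $O(m)$ calls to the oracle \ref{O1}. The obstacle is that \ref{O1} only returns measures of \emph{unions} of members of $\mc{F}$, whereas $E_{\mc{A}}$ is built from intersections and complements; this is resolved by passing to complements inside $\Omega$, which is legitimate because $\Omega=\bigcup_{Q\in\mc{F}}Q$ is a finite union of sets of finite measure, so $\mu(\Omega)<\infty$.

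Concretely, first compute $\mu(\Omega)=\mu\hab{\bcup{Q\in\mc{F}}Q}$ with one call to \ref{O1}. Then, for each $\mc{A}\in\mc{P}_0(\mc{F})$, call \ref{O1} on the subcollection $\mc{F}\setminus\mc{A}$ and set
\begin{equation*}
  h(\mc{A}):=\mu(\Omega)-\mu\hab{\bcup{Q\in\mc{F}\setminus\mc{A}}Q}=\mu\has{\Omega\setminus\bcup{Q\in\mc{F}\setminus\mc{A}}Q}.
\end{equation*}
Every $x\in\Omega$ belongs to $E_{\mc{B}}$ for exactly one $\mc{B}\in\mc{P}_0(\mc{F})$, namely $\mc{B}=\{Q\in\mc{F}:x\in Q\}$, and for this $\mc{B}$ one has $\mc{B}\subseteq\mc{A}$ if and only if $x\notin Q$ for all $Q\in\mc{F}\setminus\mc{A}$. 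Hence $\Omega\setminus\bcup{Q\in\mc{F}\setminus\mc{A}}Q=\bigsqcup_{\mc{B}\in\mc{P}_0(\mc{F}):\,\mc{B}\subseteq\mc{A}}E_{\mc{B}}$ (this is also immediate from \eqref{eq:partition}), and taking measures yields the key identity
\begin{equation*}
  h(\mc{A})=\sum_{\mc{B}\in\mc{P}_0(\mc{F}):\,\mc{B}\subseteq\mc{A}}\mu(E_{\mc{B}}),\qquad \mc{A}\in\mc{P}_0(\mc{F}).
\end{equation*}

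It then remains to invert this relation. Ordering $\mc{P}_0(\mc{F})$ by non-decreasing cardinality, the fact that $\mc{B}\subsetneq\mc{A}$ forces $\abs{\mc{B}}<\abs{\mc{A}}$ shows that the matrix $\bigl(\ind[\mc{B}\subseteq\mc{A}]\bigr)_{\mc{A},\mc{B}\in\mc{P}_0(\mc{F})}$ is lower triangular with unit diagonal in this ordering, so the $\mu(E_{\mc{A}})$ are recovered by forward substitution,
\begin{equation*}
  \mu(E_{\mc{A}})=h(\mc{A})-\sum_{\mc{B}\in\mc{P}_0(\mc{F}):\,\mc{B}\subsetneq\mc{A}}\mu(E_{\mc{B}}),
\end{equation*}
where the right-hand side has already been computed. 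For the runtime, the procedure makes $m+1=O(m)$ calls to \ref{O1}, contributing $O(m\cdot\mathrm{EO}_1)$, while sorting $\mc{P}_0(\mc{F})$ and carrying out the $m$ substitution steps — each a sum of at most $m$ previously computed terms together with a $\subseteq$-test — costs a further $O(m^2)$ operations, for a total of $O(m\cdot\mathrm{EO}_1+m^2)$. The only substantive step is the key identity, which is a direct consequence of \eqref{eq:partition}; everything else is bookkeeping. (Alternatively one could set up an $m\times m$ system directly from \eqref{eq:partition}, but the matrix $\bigl(\ind[\mc{A}\cap\mc{B}\neq\emptyset]\bigr)$ obtained that way need not be triangular, which is the reason for passing to $h$.)
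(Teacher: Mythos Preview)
Your proof is correct and essentially identical to the paper's: both compute $h(\mc{A})=\mu\hab{\bcup{Q\in\mc{F}}Q}-\mu\hab{\bcup{Q\in\mc{F}\setminus\mc{A}}Q}$ via two oracle calls, invoke \eqref{eq:partition} to obtain $h(\mc{A})=\sum_{\mc{B}\in\mc{P}_0(\mc{F}):\mc{B}\subseteq\mc{A}}\mu(E_{\mc{B}})$, and then recover $\mu(E_{\mc{A}})$ by induction on $\abs{\mc{A}}$ (which you phrase as forward substitution in a triangular system). The runtime accounting is also the same.
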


\begin{proof}
  For any $\mc{A} \in \mc{P}_0(\mc{F})$, we have by \eqref{eq:partition}
  $$
  \hab{\bcup{Q \in \mc{F}} Q}  \setminus \hab{\bcup{Q \in \mc{F} \setminus \mc{A}} Q} = \bigcup_{\mc{B} \in \mc{P}_0({\mc{F}}):\mc{B} \subseteq \mc{A}} E_{\mc{B}}.
  $$
  Since the $E_{\mc{B}}$ for $\mc{B} \in \mc{P}_0(\mc{F})$ are disjoint, this implies
  \begin{align*}
    \mu(E_{\mc{A}}) = \mu\hab{\bcup{Q \in \mc{F}}Q} - \mu\hab{\bcup{Q \in \mc{F}\setminus \mc{A}}Q} - \sum_{\mc{B} \in \mc{P}_0(\mc{F}):\mc{B} \subsetneq\mc{A}} \mu(E_{\mc{B}}).
  \end{align*}
  The first two terms on the right-hand side can be obtained from \ref{O1}.
 Therefore, we can compute $\mu(E_{\mc{A}})$ by induction on the number of sets in $\mc{A}$. To compute   $\mu(E_{\mc{A}})$ we need an oracle call and $O(m)$ additional work.
\end{proof}

 In general, $\mc{P}_0(\mc{F})$ can be equal to $\mc{P}(\mc{F})$ and thus contain $2^{\abs{\mc{F}}}$ sets. In the case of dyadic cubes or axis-parallel rectangles, which are important for applications in harmonic analysis, $\abs{\mc{P}_0(\mc{F})}$  is  polynomial in $\abs{\mc{F}}$. Hence, algorithms that have a runtime with polynomial dependence on $\abs{\mc{P}_0(\mc{F})}$ for such $\mc{F}$ have a runtime with polynomial dependence  on $\abs{\mc{F}}$.

\begin{example}\label{example:cubes}
Let $\ms{D}$ be the collection of dyadic cubes in $\R^d$, i.e.
\begin{align*}
  \ms{D} &:= \cbraceb{2^j\hab{[0,1)^d+k}:j \in \Z,k \in \Z^d}.
\end{align*}
For a finite collection $\mc{F} \subseteq \ms{D}$ we have $\abs{\mc{P}_0(\mc{F})}\leq \abs{\mc{F}}$.
Indeed, for $Q_1,Q_2\in \ms{D}$ we have either $Q_1 \subseteq Q_2$,
$Q_2 \subseteq Q_1$ or $Q_1\cap Q_2 = \emptyset$, which means that for $\mc{A} \subseteq \mc{F}$ we have $E_{\mc{A}} \neq \emptyset $ if and only if there is a $Q \in \mc{A}$ such that  $$\mc{A} = \cbrace{Q' \in \mc{F}:Q\subseteq Q'}.$$
Hence, we can index the elements of $\mc{P}_0(\mc{F})$ by elements of $\mc{F}$.
\end{example}

\begin{example}\label{example:rectangles}
  For a finite collection $\mc{F}$ of axis-parallel rectangles in $\R^d$ we have $\abs{\mc{P}_0(\mc{F})} \leq (2\abs{\mc{F}})^d$. Indeed, for $d=1$  the endpoints of the intervals partition $\bigcup_{I \in \mc{F}}I$ into at most $2\abs{\mc{F}}-1$ intervals. The general case follows by arguing in each coordinate separately.
\end{example}

\subsection{A weighted directed graph based on \texorpdfstring{$\mc{F}$}{F}}\label{subs:graph}
To construct the functions $\cbrace{\varphi_Q}_{Q \in \mc{F}}$ or the sets $\cbrace{E_Q}_{Q \in \mc{F}}$ as in the definition of (generalized) sparseness, we will build a weighted directed graph $G=(V,E,c)$ based on the collection $\mc{F}$ and the Carleson constant $\Lambda$. Here $V$ denotes the vertex set, $E$ the edge set and $c$ the capacity of the edges. Let us start by defining the vertex set $V$.
\begin{enumerate}[($\mathrm{V}_1$)]
  \item : We add a source vertex $s$ and a sink vertex $t$. \label{V1}
  \item : For every $Q \in \mc{F}$, we add a vertex $v_Q$. \label{V2}
  \item : For every $\mc{A} \in \mc{P}_0(\mc{F})$, we add a vertex $v_{\mc{A}}$. \label{V3}
\end{enumerate}
Note that for $Q \in \mc{F}$ we have both a vertex $v_Q$ and a vertex $v_{\cbrace{Q}}$ if $E_{\cbrace{Q}}\neq \emptyset$. Next, we define the edge set $E \subseteq V\times V$ and their capacity $c\colon E \to [0,\infty]$.
\begin{enumerate}[($\mathrm{E}_1$)]
  \item : For $\mc{A} \in \mc{P}_0(\mc{F})$, we add $(s, v_\mc{A})$ with  $c(s, v_\mc{A}) = \mu(E_{\mc{A}})$. \label{E1}
  \item : For  $\mc{A} \in \mc{P}_0(\mc{F})$ and $Q \in \mc{A}$, we add $(v_\mc{A}, v_Q)$ with $c(v_\mc{A}, v_Q) = \infty$. \label{E2}
  \item : For  $Q \in \mc{F}$, we add $(v_Q, t)$ with  $c(v_Q, t) = \Lambda^{-1} \mu(Q)$. \label{E3}
\end{enumerate}
We will run a maximum flow algorithm on the graph $G$ to construct the functions $\cbrace{\varphi_Q}_{Q \in \mc{F}}$ and the sets $\cbrace{E_Q}_{Q \in \mc{F}}$ as in the definition of (generalized) sparseness in Subsection \ref{subs:algorithms}. The idea is that the edges \ref{E1} provide $\mu(E_\mc{A})$ to each collection $\mc{A} \in \mc{P}_0(\mc{F})$, which they can distribute among the $Q \in \mc{A}$ via edges \ref{E2}. The edges \ref{E3} can be seen as a demand to provide $\Lambda^{-1}\mu(Q)$ to $Q \in \mc{F}$.

Returning to Example \ref{example:three-rectangles}, the graph $G$ based on $\mc{F}$ and $\Lambda=2$ is depicted in Figure \ref{fig:graph}. In the figure we abbreviate, for example, $v_{123}:= v_{\cbrace{Q_1,Q_2,Q_3}}$.

\begin{figure}[htbp]
  \centering
  \resizebox{0.9\textwidth}{!}{%
  \begin{tikzpicture}[
    >=Latex,
    vertex/.style={draw,circle,fill=black,minimum size=5pt,inner sep=0pt},
    atomlabel/.style={font=\scriptsize,fill=white,fill opacity=1,text opacity=1,inner sep=1pt},
    qlabel/.style={font=\scriptsize,fill=white,fill opacity=.86,text opacity=1,inner sep=1pt},
    caplabel/.style={font=\scriptsize,fill=white,fill opacity=.88,text opacity=1,inner xsep=1.4pt,inner ysep=.6pt},
    edge/.style={-{Latex[length=2mm]},draw=black!85}
  ]
    \node[vertex] (s) at (0,0) {};
    \node[vertex, fill=atomA] (a123) at (2.05,1.5) {};
    \node[vertex, fill=atomB] (a13) at (2.05,0.5) {};
    \node[vertex, fill=atomC] (a23) at (2.05,-0.5) {};
    \node[vertex, fill=atomD] (a3) at (2.05,-1.5) {};
    \node[vertex, fill=qone] (q1) at (5.45,1.05) {};
    \node[vertex, fill=qtwo] (q2) at (5.45,0) {};
    \node[vertex, fill=qthree] (q3) at (5.45,-1.05) {};
    \node[vertex] (t) at (7.45,0) {};

    \node[anchor=east] at (s.west) {$s$};
    \node[atomlabel,anchor=east] at (1.95,1.68) {$v_{123}$};
    \node[atomlabel,anchor=east] at (1.95,0.68) {$v_{13}$};
    \node[atomlabel,anchor=east] at (1.95,-0.68) {$v_{23}$};
    \node[atomlabel,anchor=east] at (1.95,-1.68) {$v_3$};
    \node[qlabel,anchor=west] at (5.55,1.23) {$v_{Q_1}$};
    \node[qlabel,anchor=west] at (5.55,0.18) {$v_{Q_2}$};
    \node[qlabel,anchor=west] at (5.55,-1.23) {$v_{Q_3}$};
    \node[anchor=west] at (t.east) {$t$};

    \draw[edge] (s) -- node[caplabel,pos=.5,above, yshift=3pt] {$1$} (a123);
    \draw[edge] (s) -- node[caplabel,pos=.5,above, yshift=1pt] {$2$} (a13);
    \draw[edge] (s) -- node[caplabel,pos=.5,below, yshift=-1pt] {$2$} (a23);
    \draw[edge] (s) -- node[caplabel,pos=.5,below, yshift=-3pt] {$\frac52$} (a3);

    \draw[edge] (a123) -- (q1);
    \draw[edge] (a123) -- (q2);
    \draw[edge] (a123) -- (q3);
    \draw[edge] (a13) -- (q1);
    \draw[edge] (a13) -- (q3);
    \draw[edge] (a23) -- (q2);
    \draw[edge] (a23) -- (q3);
    \draw[edge] (a3) -- (q3);

    \draw[edge] (q1) -- node[caplabel,pos=.5,above, yshift=3pt] {$\frac32$} (t);
    \draw[edge] (q2) -- node[caplabel,pos=.5,below, yshift=-1pt] {$\frac32$} (t);
    \draw[edge] (q3) -- node[caplabel,pos=.5,below, yshift=-3.5pt] {$\frac{15}{4}$} (t);
  \end{tikzpicture}}
  \caption{The weighted graph $G$ based on the collection $\mc{F}$ from Example \ref{example:three-rectangles} and $\Lambda=2$. Infinite capacities are not depicted.}
  \label{fig:graph}
\end{figure}

\subsection{The maximum flow problem}\label{subs:maxflow}
Before we continue, we provide some background on the maximum flow problem. Let $G=(V,E,c)$ be a weighted directed graph with a source $s \in V$ and a sink $t \in V$.  A function $f \colon E \to [0,\infty)$ is called an \emph{flow} through the graph  $G$ if it satisfies  \emph{flow conservation}
\begin{align}\label{eq:flowsum}
  \sum_{u\in V: (u,v) \in E} f(u,v) &= \sum_{w\in V: (v,w) \in E} f(v,w) && v \in V\setminus \cbrace{s,t} \intertext{and the \emph{capacity constraint} imposed by $c$}
\label{eq:flowcap}
  f(u,v) &\leq c(u,v), &&(u,v) \in E.
\end{align}
The \emph{value of a flow $f$} is the amount of flow  from $s$ to $t$, which is given by
\begin{equation}\label{eq:flowdef}
  \abs{f}:= \sum_{v \in V: (s,v) \in E} f(s,v) = \sum_{v \in V: (v,t) \in E} f(v,t)
\end{equation}
where the equality follows from flow conservation.

Finding a maximum flow of a graph is a classical optimization problem, for which (strongly) polynomial algorithms date back to Ford--Fulkerson \cite{FF56}, Edmonds--Karp \cite{EK72} and Dinic \cite{Di70}. The best runtime for our purposes is provided by Orlin's algorithm \cite{Or13}, which can find a maximum flow through a graph with runtime $O(\abs{V}\abs{E})$.

 To make a connection with the Carleson condition, we also need to introduce cuts. An \emph{$(s,t)$-cut} is a partition $(S,T)$ of $V$ (so $T = V \setminus S$) such that $s \in S$ and $t \in T$. The \emph{capacity} of the $(s,t)$-cut is given by
$$
c(S,T) = \sum_{(u,v) \in E: u \in S, v \in T} c(u,v).
$$
We will see that a minimum capacity $(s,t)$-cut of the graph constructed in Subsection \ref{subs:graph} corresponds to  a
maximizer in the Carleson condition.

\bigskip

Intuitively, the value of a flow through a graph can never exceed the smallest bottleneck in the graph, i.e.,  the minimum capacity of a $(s,t)$-cut. In fact, the maximum value of a  flow is  equal to the minimum capacity of a  $(s,t)$-cut, which is the famous max-flow min-cut theorem.

\begin{prop}\label{prop:maxflowmincut}
  Let $G=(V,E,c)$ be a weighted directed graph with source $s\in V$ and sink $t \in V$. Then
  \begin{align*}
    \max\cbraceb{\abs{f}: f \text{ is a flow}} = \min\cbraceb{c(S,T):(S,T) \text{ is an $(s,t)$-cut}}.
  \end{align*}
\end{prop}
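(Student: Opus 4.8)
The plan is to prove the max-flow min-cut theorem in the classical way, via augmenting paths and residual graphs, handling the subtlety that some of our capacities (the edges \ref{E2}) are $+\infty$. First I would establish the easy inequality $\max \abs{f} \leq \min c(S,T)$: for any flow $f$ and any $(s,t)$-cut $(S,T)$, flow conservation summed over all vertices in $S \setminus \{s\}$ telescopes to give $\abs{f} = \sum_{(u,v)\in E: u\in S, v\in T} f(u,v) - \sum_{(u,v)\in E: u\in T, v\in S} f(u,v)$; bounding the first sum by $c(S,T)$ using \eqref{eq:flowcap} and the second sum below by $0$ yields $\abs{f} \leq c(S,T)$. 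Taking max over flows and min over cuts gives one direction, and also shows that if we exhibit a flow $f$ and a cut $(S,T)$ with $\abs{f} = c(S,T)$ then both are optimal.

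For the reverse inequality I would use the Ford--Fulkerson argument. Given a flow $f$, build the residual graph $G_f$ on the same vertex set, where each edge $(u,v)\in E$ with $f(u,v) < c(u,v)$ contributes a forward residual edge of capacity $c(u,v) - f(u,v)$, and each edge $(u,v)\in E$ with $f(u,v) > 0$ contributes a backward residual edge $(v,u)$ of capacity $f(u,v)$. If there is a directed path from $s$ to $t$ in $G_f$ (an augmenting path), one can push a positive amount of flow along it — the bottleneck residual capacity — to obtain a flow of strictly larger value, so a flow of maximum value admits no augmenting path. Conversely, if $G_f$ has no $s$--$t$ path, let $S$ be the set of vertices reachable from $s$ in $G_f$ and $T = V\setminus S$; then $t\in T$, and for every edge $(u,v)\in E$ with $u\in S$, $v\in T$ we must have $f(u,v) = c(u,v)$ (else a forward residual edge would put $v$ in $S$), while for every edge $(u,v)\in E$ with $u\in T$, $v\in S$ we must have $f(u,v) = 0$. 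Plugging into the telescoped identity above gives $\abs{f} = c(S,T)$, so this flow and cut are jointly optimal, proving equality.

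The main obstacle is the existence of a maximum flow together with the infinite capacities. Rather than invoking finiteness of Ford--Fulkerson's termination (which can fail with irrational capacities, and here capacities are arbitrary nonnegative reals), I would instead argue existence of a maximizer directly: the set of flows is a nonempty (the zero flow works), closed, bounded subset of $\R^E$ — bounded because every flow value is at most the finite capacity $\sum_{\mc{A}} c(s,v_\mc{A}) = \sum_{\mc{A}} \mu(E_\mc{A}) < \infty$ out of the source, and each coordinate $f(u,v)$ is controlled: the edges leaving $s$ and entering $t$ have finite capacity, and an infinite-capacity edge $(v_\mc{A}, v_Q)$ carries at most $c(v_Q,t)$ worth of flow into $v_Q$ hence, by conservation at $v_\mc{A}$, at most $c(s,v_\mc{A})$; so the feasible region lies in a compact box and $\abs{\cdot}$, being continuous and linear, attains its maximum. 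Since the statement in the excerpt is phrased for a general weighted directed graph, I would either add the standing hypothesis that source-incident capacities are finite (which holds for the graph $G$ of Subsection~\ref{subs:graph}) or note that the result extends by the same compactness argument as long as $\max\abs f$ is finite; alternatively one cites Orlin's algorithm \cite{Or13}, which constructively produces a maximum flow and, via the residual-graph analysis, the matching cut. With a maximum flow in hand, the residual-graph/reachability argument of the previous paragraph supplies a cut of equal capacity, completing the proof.
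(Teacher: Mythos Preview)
The paper does not prove Proposition~\ref{prop:maxflowmincut}; it is stated as the classical max-flow min-cut theorem and immediately used, with the surrounding text merely giving the intuition (``the value of a flow through a graph can never exceed the smallest bottleneck'') and a pointer to the algorithmic literature (Ford--Fulkerson, Edmonds--Karp, Dinic, Orlin). So there is no in-paper proof to compare against.

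Your proposal is the standard and correct argument: weak duality via the telescoped flow-across-a-cut identity, then the residual-graph/reachability construction to exhibit a matching cut from a flow with no augmenting path. One minor caveat on your compactness argument for existence: for a \emph{general} graph with infinite capacities the feasible flow set need not be bounded (a directed cycle of infinite-capacity edges allows arbitrary circulation), so ``$\max\abs{f}$ finite'' alone does not give compactness. This is harmless here, since the graph $G$ of Subsection~\ref{subs:graph} is layered and acyclic, and your coordinate-wise bounds are valid in that case; for the general statement your fallback of invoking a terminating strongly polynomial algorithm (or, equivalently, replacing each infinite capacity by the total finite capacity out of $s$, which does not change the problem) closes the gap.
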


\subsection{Constructing the \texorpdfstring{$\varphi_Q$}{phiQ}} \label{subs:algorithms}
We are now ready to provide the algorithm to construct the
functions $\cbrace{\varphi_Q}_{Q \in \mc{F}}$  as in the definition of generalized sparseness.

\begin{algorithm}\label{algorithmFindphiQ}
\DontPrintSemicolon
    \caption{Construct $\varphi_Q$.}
        \SetKwInOut{Input}{Input}\SetKwInOut{Output}{Output}
    \Input{A finite collection $\mc{F}$   satisfying the $\Lambda$-Carleson condition.}
    \Output{The functions $\varphi_Q$ for $Q\in \mc{F}$.}
    { Construct the  graph $G=(V,E,c)$ as in Subsection \ref{subs:graph}.\label{line:constructG}\;
    Compute a maximum flow $f \colon E \to [0,\infty)$ from $s$ to $t$.\label{line:maxflow}\;
    }
    \For{$Q \in \mc{F}$
    }{
    Set $\varphi_Q = \displaystyle\sum_{\mc{A} \in \mc{P}_0(\mc{F}): Q \in \mc{A}} \frac{f(v_{\mc{A}},v_Q)}{\mu(E_{\mc{A}})}\ind_{E_{\mc{A}}}$. \label{line:defphi}
    }
\end{algorithm}

We will show that the output functions $\cbrace{\varphi_Q}_{Q \in \mc{F}}$  from Algorithm \ref{algorithmFindphiQ} satisfy Definition \ref{def:gensparse} with $\eta = \frac{1}{\Lambda}$, which proves that $\mc{F}$ is generalized $\frac{1}{\Lambda}$-sparse.
\begin{theorem}\label{thm:phiQ}
 Let $\mc{F}$ be a finite collection of sets in $\Sigma$ with $0<\mu(Q)<\infty$ for all $Q \in \mc{F}$ and let $\mc{P}_0(\mc{F})$ be given. Suppose that $\mc{F}$ satisfies the $\Lambda$-Carleson condition. The output $\cbrace{\varphi_Q}_{Q \in \mc{F}}$ of Algorithm \ref{algorithmFindphiQ} satisfies
 \begin{align}\label{eq:toprovealg21}
  \int_Q \varphi_Q\dd \mu &\geq \tfrac1\Lambda\mu(Q),\qquad Q \in \mc{F}\\
  \sum_{Q \in \mc{F}}\varphi_Q &\leq 1.\label{eq:toprovealg22}
\end{align}
The runtime of Algorithm  \ref{algorithmFindphiQ}  is $O(m\cdot \mathrm{EO}_1+ n^2m+nm^2 )$, where $n=\abs{\mc{F}}$ and $m = \abs{\mc{P}_0(\mc{F})}$.
\end{theorem}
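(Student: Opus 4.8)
The plan is to establish \eqref{eq:toprovealg22} and \eqref{eq:toprovealg21} separately for the functions output by Algorithm~\ref{algorithmFindphiQ}, and then to account for the runtime; the key tool for \eqref{eq:toprovealg21} is the max-flow min-cut duality (Proposition~\ref{prop:maxflowmincut}) applied to the graph $G$ of Subsection~\ref{subs:graph}. For \eqref{eq:toprovealg22}, note that each $\varphi_Q$ from line~\ref{line:defphi} is nonnegative and, since $E_{\mc A}\subseteq Q$ whenever $Q\in\mc A$, supported in $Q$. Because the sets $E_{\mc A}$, $\mc A\in\mc P_0(\mc F)$, partition $\Omega=\bcup{Q\in\mc F}Q$, it is enough to bound $\sum_{Q\in\mc F}\varphi_Q$ on a fixed $E_{\mc A}$, where it equals $\mu(E_{\mc A})^{-1}\sum_{Q\in\mc A}f(v_{\mc A},v_Q)$ (any $E_{\mc A}$ with $\mu(E_{\mc A})=0$ carries no flow and is vacuous, setting $0/0:=0$). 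Flow conservation \eqref{eq:flowsum} at $v_{\mc A}$ gives $\sum_{Q\in\mc A}f(v_{\mc A},v_Q)=f(s,v_{\mc A})$, and the capacity constraint \eqref{eq:flowcap} on the edge $(s,v_{\mc A})$ from \ref{E1} gives $f(s,v_{\mc A})\le\mu(E_{\mc A})$, so $\sum_{Q\in\mc F}\varphi_Q\le 1$ pointwise; in particular each $\varphi_Q$ maps into $[0,1]$.

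For \eqref{eq:toprovealg21}, integrating line~\ref{line:defphi} and using that $\varphi_Q$ is supported in $Q$ gives $\int_Q\varphi_Q\dd\mu=\sum_{\mc A\in\mc P_0(\mc F):\,Q\in\mc A}f(v_{\mc A},v_Q)=f(v_Q,t)$, the last equality by flow conservation at $v_Q$, whose only outgoing edge is $(v_Q,t)$ from \ref{E3}. By \eqref{eq:flowcap} this is at most $c(v_Q,t)=\Lambda^{-1}\mu(Q)$, so \eqref{eq:toprovealg21} is equivalent to a maximum flow saturating every edge into $t$; since $\abs{f}=\sum_{Q\in\mc F}f(v_Q,t)\le\Lambda^{-1}\sum_{Q\in\mc F}\mu(Q)$ by \eqref{eq:flowdef}, it suffices to show the maximum flow value equals $\Lambda^{-1}\sum_{Q\in\mc F}\mu(Q)$. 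By Proposition~\ref{prop:maxflowmincut} this is the minimum $(s,t)$-cut capacity; the cut $(V\setminus\cbrace{t},\cbrace{t})$ realizes $\Lambda^{-1}\sum_{Q\in\mc F}\mu(Q)$, so it remains to show $c(S,T)\ge\Lambda^{-1}\sum_{Q\in\mc F}\mu(Q)$ for every $(s,t)$-cut $(S,T)$. We may assume $c(S,T)<\infty$, so no edge from \ref{E2} crosses, i.e.\ $v_{\mc A}\in S$ forces $v_Q\in S$ for all $Q\in\mc A$. Writing $\mc B:=\cbrace{Q\in\mc F:v_Q\in T}$, this makes $v_{\mc A}\in T$ whenever $\mc A\cap\mc B\ne\emptyset$, and with the partition identity \eqref{eq:partition} rewriting $\sum_{\mc A:\,\mc A\cap\mc B\ne\emptyset}\mu(E_{\mc A})=\mu\hab{\bcup{Q\in\mc B}Q}$ we obtain
\begin{align*}
c(S,T)&\ge\sum_{\substack{\mc A\in\mc P_0(\mc F):\\ \mc A\cap\mc B\ne\emptyset}}\mu(E_{\mc A})+\frac1\Lambda\sum_{Q\in\mc F\setminus\mc B}\mu(Q)\\
&=\mu\hab{\bcup{Q\in\mc B}Q}+\frac1\Lambda\sum_{Q\in\mc F\setminus\mc B}\mu(Q).
\end{align*}
The $\Lambda$-Carleson condition applied to $\mc B$ bounds the first term below by $\Lambda^{-1}\sum_{Q\in\mc B}\mu(Q)$, whence $c(S,T)\ge\Lambda^{-1}\sum_{Q\in\mc F}\mu(Q)$. (To run a max-flow algorithm needing finite capacities, one first replaces the $\infty$ in \ref{E2} by $\sum_{Q\in\mc F}\mu(Q)$, which, as $\Lambda\ge1$, leaves the maximum flow value unchanged.)

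For the runtime: constructing $G$ in line~\ref{line:constructG} requires the numbers $\mu(E_{\mc A})$ for $\mc A\in\mc P_0(\mc F)$, which Lemma~\ref{lem:measureEA} gives in $O(m\cdot\mathrm{EO}_1+m^2)$, and then $\mu(Q)=\sum_{\mc A\ni Q}\mu(E_{\mc A})$ in a further $O(nm)$; the resulting graph has $\abs{V}=n+m+2$ vertices and $\abs{E}=O(nm)$ edges ($m$ of type \ref{E1}, at most $n$ per $\mc A$ of type \ref{E2}, $n$ of type \ref{E3}). Orlin's algorithm computes a maximum flow in line~\ref{line:maxflow} in $O(\abs{V}\abs{E})=O((n+m)nm)=O(n^2m+nm^2)$, and the final loop produces $n$ functions, each given by at most $m$ coefficients obtained by one division, in $O(nm)$; summing gives $O(m\cdot\mathrm{EO}_1+n^2m+nm^2)$. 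The one genuinely nontrivial point is the cut analysis above: recognizing that finite-capacity $(s,t)$-cuts of $G$ are parametrized by subcollections $\mc B\subseteq\mc F$, that \eqref{eq:partition} converts the cut capacity into $\mu\hab{\bcup{Q\in\mc B}Q}+\Lambda^{-1}\sum_{Q\in\mc F\setminus\mc B}\mu(Q)$, and that the $\Lambda$-Carleson condition is precisely the inequality certifying the trivial cut $(V\setminus\cbrace{t},\cbrace{t})$ as a minimum cut; the flow-conservation bookkeeping, the runtime count, and the finite-capacity reduction are all routine.
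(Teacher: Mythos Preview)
Your proof is correct and follows essentially the same route as the paper: flow conservation and the capacity constraint on the edges \ref{E1} give \eqref{eq:toprovealg22}, while \eqref{eq:toprovealg21} reduces via max-flow min-cut to showing every finite-capacity $(s,t)$-cut has capacity at least $\Lambda^{-1}\sum_{Q\in\mc F}\mu(Q)$, which you verify using \eqref{eq:partition} and the $\Lambda$-Carleson condition exactly as the paper does (your $\mc B$ is the paper's $\mc A_T$). Your added remarks on the $0/0$ convention, the explicit minimum cut $(V\setminus\cbrace{t},\cbrace{t})$, and the finite-capacity replacement for \ref{E2} are welcome clarifications but not substantive departures.
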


\begin{proof}
Let $\cbrace{\varphi_Q}_{Q \in \mc{F}}$ be the output of Algorithm \ref{algorithmFindphiQ}.
For $x \in \Omega$ we have by Line \ref{line:defphi} of Algorithm \ref{algorithmFindphiQ}, the flow conservation in \eqref{eq:flowsum}, the capacity constraint \eqref{eq:flowcap} for the edges \ref{E1} and the disjointness of the $E_{\mc{A}}$'s, that
\begin{align*}
  \sum_{Q \in \mc{F}}\varphi_Q(x) &= \sum_{Q \in \mc{F}}\sum_{\mc{A} \in \mc{P}_0(\mc{F}): Q \in \mc{A}} \frac{f(v_{\mc{A}},v_Q)}{\mu(E_{\mc{A}})}\ind_{E_{\mc{A}}}(x)\\
  &= \sum_{\mc{A} \in \mc{P}_0(\mc{F})}\sum_{Q \in \mc{A}} \frac{f(v_{\mc{A}},v_Q)}{\mu(E_{\mc{A}})}\ind_{E_{\mc{A}}}(x)\\
  &= \sum_{\mc{A} \in \mc{P}_0(\mc{F})} \frac{f(s,v_{\mc{A}})}{\mu(E_{\mc{A}})}\ind_{E_{\mc{A}}}(x)\\
  &\leq \sum_{\mc{A} \in \mc{P}_0(\mc{F})} \ind_{E_{\mc{A}}}(x) \leq 1,
\end{align*}
proving \eqref{eq:toprovealg22}.

For \eqref{eq:toprovealg21}, let  $Q \in \mc{F}$. By the flow conservation in \eqref{eq:flowsum}, we have
\begin{align*}
  \int_Q \varphi_Q\dd \mu = \sum_{\mc{A} \in \mc{P}_0(\mc{F}): Q \in \mc{A}} f(v_{\mc{A}},v_Q) = f(v_Q,t).
\end{align*}
Hence, we need to show that
$f(v_Q,t) \geq \Lambda^{-1}\mu(Q) = c(v_Q,t)$. Since $f \leq c$,  it suffices to show that
$$
\sum_{Q \in \mc{F}} f(v_Q,t) \geq \sum_{Q\in \mc{F}} \Lambda^{-1}\mu(Q).
$$
As the left-hand side is equal to $\abs{f}$ and $f$ is a maximum flow, by Proposition \ref{prop:maxflowmincut} it is equivalent to show that any $(s,t)$-cut  has capacity at least $\sum_{Q\in \mc{F}} \Lambda^{-1}\mu(Q)$.

Let $(S,T)$ be an $(s,t)$-cut with finite capacity. Define \begin{align*}
  \mc{A}_S&:=\cbrace{Q\in \mc{F}:v_Q \in S},\\
  \mc{A}_T&:=\cbrace{Q\in \mc{F}:v_Q \in T},
\end{align*} and note that for $\mc{B} \in \mc{P}_0(\mc{F})$ with $\mc{A}_T\cap \mc{B} \neq \emptyset$ we must have $v_{\mc{B}} \in T$, since the edges \ref{E2} have infinite capacity. Using this observation, \eqref{eq:partition} and the $\Lambda$-Carleson condition, we have
\begin{align*}
c(S,T) &\geq \sum_{Q \in \mc{A}_S} c(v_Q,t) + \sum_{\mc{B} \in \mc{P}_0(\mc{F}):\mc{A}_T\cap \mc{B} \neq \emptyset}  c(s,v_{\mc{B}})\\
&= \sum_{Q \in \mc{A}_S}\Lambda^{-1}\mu(Q) + \sum_{\mc{B} \in \mc{P}_0(\mc{F}):\mc{A}_T\cap \mc{B} \neq \emptyset}  \mu(E_{\mc{B}})\\
&= \sum_{Q \in \mc{A}_S} \Lambda^{-1}\mu(Q) + \mu \has{\bcup{Q \in \mc{A}_T} Q}\geq \sum_{Q \in \mc{A}_S\cup \mc{A}_T} \Lambda^{-1}\mu(Q).
\end{align*}
This shows that any $(s,t)$-cut has capacity at least $\sum_{Q\in \mc{F}} \Lambda^{-1}\mu(Q)$, finishing the proof of
\eqref{eq:toprovealg21}.

It remains to show the claimed runtime. By Lemma \ref{lem:measureEA} we know that line \ref{line:constructG} of Algorithm \ref{algorithmFindphiQ} has a runtime of  $O(m\cdot \mathrm{EO}_1+ m^2)$. Furthermore, we have
\begin{align*}
  \abs{V} &= \abs{\mc{P}_0(\mc{F})} +\abs{\mc{F}}+2 = O(m+n),\\
  \abs{E} &\leq \abs{\mc{P}_0(\mc{F})} + \abs{\mc{F}}\cdot\abs{\mc{P}_0(\mc{F})} + \abs{\mc{F}} = O(nm).
\end{align*}
Using Orlin's algorithm \cite{Or13}, line \ref{line:maxflow} of Algorithm \ref{algorithmFindphiQ} has a runtime of $O(\abs{V}\abs{E}) = O(n^2m+nm^2)$. Finally, line \ref{line:defphi} of Algorithm \ref{algorithmFindphiQ} is executed $n$ times and has a runtime of $O(m)$. Combined, this yields the claimed runtime.
\end{proof}

\begin{remark}
  If $\mc{F}$ consists of (dyadic) cubes or rectangles in $\R^d$, Algorithm~\ref{algorithmFindphiQ} is strongly polynomial by Examples \ref{example:cubes} and \ref{example:rectangles}.
\end{remark}

\begin{remark}\label{rem:infinite}
The key ingredient in the proof of Theorem \ref{thm:phiQ} is the max-flow min-cut theorem. This theorem has been generalized to infinite graphs  \cite{ABGPS11,Loc22}, albeit only for graphs with a countable number of edges. As a consequence, for a \emph{countable} collection of sets $\mc{F}$ in $\Sigma$ such that $\mc{P}_0(\mc{F})$ is countable,  we can  show the equivalence between the $\Lambda$-Carleson condition and generalized $\frac1\Lambda$-sparseness as well. This provides an alternative proof to \cite{Haen18} for, e.g., $\mc{F}$ a countable collection of rectangles in $\R^d$. For the details, we refer to \cite[Chapter 4]{Hon24}.
\end{remark}

\begin{example}\label{example:three-rectangle-algorithms}
We return to the collection of rectangles $\mc{F}$ from Example \ref{example:three-rectangles}, for which we computed
$$
\Lambda_{\mc{F}}= \tfrac95 <2.
$$
We constructed the graph $G$ corresponding to $\mc{F}$ and $\Lambda=2$ in Figure \ref{fig:graph}. An example of a flow $f\colon E\to [0,\infty)$ through the graph $G$ satisfying the capacity constraints is provided in Table \ref{tab:1} and is depicted in Figure \ref{fig:flow-example}.

\begin{table}[htbp]
\centering
\caption{The flow values of $f$.}
\label{tab:1}
\renewcommand{\arraystretch}{1.25}

\begin{tabular}[t]{c}
{\scriptsize $s \to \mathcal A$}\\[2pt]
\begin{tabular}{c|cccc}
 & $v_{123}$ & $v_{13}$ & $v_{23}$ & $v_3$ \\ \hline
$s$ & $1$ & $2$ & $2$ & $\tfrac74$
\end{tabular}
\end{tabular}
\hspace{0.2cm}
\begin{tabular}[t]{c}
{\scriptsize $\mathcal A \to Q$}\\[2pt]
\begin{tabular}{c|ccc}
 & $v_{Q_1}$ & $v_{Q_2}$ & $v_{Q_3}$ \\ \hline
$v_{123}$ & $0$ & $0$ & $1$ \\
$v_{13}$  & $\tfrac32$ & $0$ & $\tfrac12$ \\
$v_{23}$  & $0$ & $\tfrac32$ & $\tfrac12$ \\
$v_3$     & $0$ & $0$ & $\tfrac74$
\end{tabular}
\end{tabular}
\hspace{0.2cm}
\begin{tabular}[t]{c}
{\scriptsize $Q \to t$}\\[2pt]
\begin{tabular}{c|c}
 & $t$ \\ \hline
$v_{Q_1}$ & $\tfrac32$ \\
$v_{Q_2}$ & $\tfrac32$ \\
$v_{Q_3}$ & $\tfrac{15}{4}$
\end{tabular}
\end{tabular}
\end{table}

\begin{figure}[htbp]
  \centering
  \resizebox{0.9\textwidth}{!}{%
  \begin{tikzpicture}[
    >=Latex,
    vertex/.style={draw,circle,fill=black,minimum size=5pt,inner sep=0pt},
    atomlabel/.style={font=\scriptsize,fill=white,fill opacity=1,text opacity=1,inner sep=1pt},
    qlabel/.style={font=\scriptsize,fill=white,fill opacity=.9,text opacity=1,inner sep=1pt},
    flowlabel/.style={font=\scriptsize,fill=white,fill opacity=.95,text opacity=1,inner xsep=1.4pt,inner ysep=.6pt},
    zeroflow/.style={-{Latex[length=2mm]},draw=black!85},
    posflow/.style={-{Latex[length=2mm]},draw=flowcolor!50,line width=1pt},
    satflow/.style={-{Latex[length=2mm]},draw=flowcolor,line width=1pt}
  ]

    % vertices
    \node[vertex] (s) at (0,0) {};
    \node[vertex, fill=atomA] (a123) at (2.05,1.5) {};
    \node[vertex, fill=atomB] (a13)  at (2.05,0.5) {};
    \node[vertex, fill=atomC] (a23)  at (2.05,-0.5) {};
    \node[vertex, fill=atomD] (a3)   at (2.05,-1.5) {};
    \node[vertex, fill=qone]   (q1)  at (5.45,1.05) {};
    \node[vertex, fill=qtwo]   (q2)  at (5.45,0) {};
    \node[vertex, fill=qthree] (q3)  at (5.45,-1.05) {};
    \node[vertex] (t) at (7.45,0) {};

    % labels
    \node[anchor=east] at (s.west) {$s$};
    \node[atomlabel,anchor=east] at (1.95,1.68) {$v_{123}$};
    \node[atomlabel,anchor=east] at (1.95,0.68) {$v_{13}$};
    \node[atomlabel,anchor=east] at (1.95,-0.68) {$v_{23}$};
    \node[atomlabel,anchor=east] at (1.95,-1.68) {$v_3$};
    \node[qlabel,anchor=west] at (5.55,1.23) {$v_{Q_1}$};
    \node[qlabel,anchor=west] at (5.55,0.18) {$v_{Q_2}$};
    \node[qlabel,anchor=west] at (5.55,-1.23) {$v_{Q_3}$};
    \node[anchor=west] at (t.east) {$t$};

    % source edges
    \draw[satflow] (s) -- node[flowlabel,pos=.5,above,yshift=3pt] {$1$} (a123);
    \draw[satflow] (s) -- node[flowlabel,pos=.5,above,yshift=1pt] {$2$} (a13);
    \draw[satflow] (s) -- node[flowlabel,pos=.5,below,yshift=-1pt] {$2$} (a23);
    \draw[posflow] (s) -- node[flowlabel,pos=.5,below,yshift=-3pt] {$\tfrac74$} (a3);

    % middle edges
    \draw[zeroflow] (a123) -- (q1);
    \draw[zeroflow] (a123) -- (q2);
    \draw[posflow]  (a123) -- (q3);
    \draw[posflow]  (a13)  -- (q1);
    \draw[posflow]  (a13)  -- (q3);
    \draw[posflow]  (a23)  -- (q2);
    \draw[posflow]  (a23)  -- (q3);
    \draw[posflow]  (a3)   -- (q3);

    % sink edges
    \draw[satflow] (q1) -- node[flowlabel,pos=.5,above,yshift=3pt] {$\tfrac32$} (t);
    \draw[satflow] (q2) -- node[flowlabel,pos=.5,below,yshift=-1pt] {$\tfrac32$} (t);
    \draw[satflow] (q3) -- node[flowlabel,pos=.5,below,yshift=-3.5pt] {$\tfrac{15}{4}$} (t);

  \end{tikzpicture}}
  \caption{The  flow $f$ through the graph $G$ in Figure \ref{fig:graph}. The flow values on the atom-to-set edges ($\mathrm{E}_2$) are omitted, see Table \ref{tab:1}. Saturated edges are drawn in dark blue and positive, non-saturated edges are drawn in light blue.}
  \label{fig:flow-example}
\end{figure}

The value of the flow $f$ is
\[
1+2+2+\tfrac74=\tfrac32+\tfrac32+\tfrac{15}{4}=\tfrac{27}{4},
\]
which is maximum. Indeed, the cut $S=V\setminus\cbrace{t}$ and $T=\cbrace{t}$ has capacity
\[
c(S,T)=\tfrac32+\tfrac32+\tfrac{15}{4}=\tfrac{27}{4},
\]
so Proposition \ref{prop:maxflowmincut} shows that no larger flow is possible.
Recalling that 
\[
\mu\ha{E_3}=\tfrac52,\qquad \mu\ha{E_{13}}=2,\qquad
\mu\ha{E_{23}}=2,\qquad \mu\ha{E_{123}}=1,
\]
Algorithm \ref{algorithmFindphiQ} returns
\[
\varphi_{Q_1}=\tfrac34\ind_{E_{13}},\qquad
\varphi_{Q_2}=\tfrac34\ind_{E_{23}},
\]
and
\[
\varphi_{Q_3}
=\ind_{E_{123}}
+\tfrac14\ind_{E_{13}}
+\tfrac14\ind_{E_{23}}
+ \tfrac{7}{10}\ind_{E_3}.
\]
These functions satisfy $\sum_{Q\in\mc{F}}\varphi_Q\leq 1$ and
\[
\int_{Q_j}\varphi_{Q_j}\dd\mu=\tfrac12\mu(Q_j),\qquad j=1,2,3.
\]
\end{example}

\subsection{Constructing the \texorpdfstring{$E_Q$}{EQ}} \label{subs:algorithms2}
It remains to provide a constructive proof of sets $\cbrace{E_Q}_{Q \in \mc{F}}$ under the additional assumption that  the measure space $(\Omega,\Sigma,\mu)$ is non-atomic. Using  a convexity argument as in \cite[Lemma 2.3]{Do75}, one can show the existence of sets $\{E_Q\}_{Q\in\mathcal{F}}$, given the functions $\cbrace{\varphi_Q}_{Q \in \mc{F}}$. However, this argument is based on the Krein-Milman theorem and is therefore non-constructive. Instead, we will adapt Algorithm \ref{algorithmFindphiQ} to construct the sets $\cbrace{E_Q}_{Q \in \mc{F}}$ explicitly. To do so, we need an additional oracle handling the assumption that our measure space is non-atomic. Concretely, we assume to have the following oracle available:
\begin{enumerate}[($\mathrm{O}_2$)]
  \item :  Given  $E\in \Sigma$ and $a \in [0,1]$, return $E_a \subseteq E$ with $\mu(E_a) = a \mu(E)$. \label{O2}
\end{enumerate}
The time needed for one call to the oracle \ref{O2} is denoted by $\mathrm{EO}_2$.

\begin{algorithm}[t]\label{algorithmFindEQ}
\DontPrintSemicolon
    \caption{Construct $E_Q$.}

    \SetKwInOut{Input}{Input}\SetKwInOut{Output}{Output}
    \Input{A finite collection $\mc{F}$   satisfying the $\Lambda$-Carleson condition.}
    \Output{The sets $E_Q$ for $Q\in \mc{F}$.}
    { Construct the  graph $G=(V,E,c)$ as in Subsection \ref{subs:graph}.\label{line:constructGEQ}\;
    Compute the maximum flow $f \colon E \to [0,\infty)$ from $s$ to $t$.\label{line:maxflowEQ}\;
    }
    \For{$Q \in \mc{F}$}{
     Set $E_Q = \emptyset$.
    }
    \For{$\mc{A} \in \mc{P}_0(\mc{F})$
    }{
    Set $F=E_{\mc{A}}$.\;
    \For{$Q \in \mc{A}$}{
     Take a set $F_{Q}\subseteq F$ with $\mu(F_{Q}) = {f(v_{\mc{A}},v_Q)}$.\label{line:takechunk}\;
    Update $E_Q \leftarrow  E_Q \cup  {F_{Q}}$.\label{line:updateEQ} \;
    Update $F\leftarrow F\setminus F_{Q}$.
    }   }
\end{algorithm}

We will  show that the output sets $\cbrace{E_Q}_{Q \in \mc{F}}$  from Algorithm \ref{algorithmFindEQ} satisfy the definition of $\eta$-sparseness with $\eta = \frac{1}{\Lambda}$, which proves that $\mc{F}$ is $\frac{1}{\Lambda}$-sparse.

\begin{theorem}\label{thm:EQ}
 Let $\mc{F}$ be a finite collection of sets in $\Sigma$ with $0<\mu(Q)<\infty$ for all $Q \in \mc{F}$ and let $\mc{P}_0(\mc{F})$ be given. Suppose that $\mc{F}$ satisfies the $\Lambda$-Carleson condition and $(\Omega,\Sigma,\mu)$ is non-atomic. The output $\cbrace{E_Q}_{Q \in \mc{F}}$ of Algorithm \ref{algorithmFindEQ} satisfies $E_Q \subseteq Q$ for all $Q \in \mc{F}$,
 \begin{align}\label{eq:toprovealg3}
  \mu(E_Q) &\geq \tfrac{1}{\Lambda} \mu(Q),\qquad Q \in \mc{F},
\end{align}
and the sets $\cbrace{E_Q}_{Q \in \mc{F}}$ are disjoint.
The runtime of Algorithm  \ref{algorithmFindEQ}  is $$O(m\cdot \mathrm{EO}_1+nm\cdot \mathrm{EO}_2+ n^2m+nm^2 ),$$ where $n=\abs{\mc{F}}$ and $m = \abs{\mc{P}_0(\mc{F})}$.
\end{theorem}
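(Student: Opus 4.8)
The plan is to reduce Theorem~\ref{thm:EQ} to Theorem~\ref{thm:phiQ} by observing that Algorithm~\ref{algorithmFindEQ} is a ``set-valued implementation'' of Algorithm~\ref{algorithmFindphiQ}: both run the same graph construction and the same maximum flow computation, and the only difference is that where Algorithm~\ref{algorithmFindphiQ} spreads the flow value $f(v_{\mc{A}},v_Q)$ as a constant density $f(v_{\mc{A}},v_Q)/\mu(E_{\mc{A}})$ over all of $E_{\mc{A}}$, Algorithm~\ref{algorithmFindEQ} instead carves out an honest measurable chunk $F_Q\subseteq E_{\mc{A}}$ of measure exactly $f(v_{\mc{A}},v_Q)$. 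First I would check that the carving is well-defined: inside the loop over $Q\in\mc{A}$ we maintain a remaining set $F$ with $\mu(F) = \mu(E_{\mc{A}}) - \sum_{Q'\text{ done}} f(v_{\mc{A}},v_{Q'})$, so the existence of $F_Q\subseteq F$ with $\mu(F_Q)=f(v_{\mc{A}},v_Q)$ via oracle \ref{O2} requires $f(v_{\mc{A}},v_Q)\le\mu(F)$, i.e. $\sum_{Q\in\mc{A}}f(v_{\mc{A}},v_Q)\le\mu(E_{\mc{A}})$; but this is exactly the capacity constraint \eqref{eq:flowcap} on the edge \ref{E1}, since by flow conservation \eqref{eq:flowsum} at $v_{\mc{A}}$ we have $\sum_{Q\in\mc{A}}f(v_{\mc{A}},v_Q)=f(s,v_{\mc{A}})\le c(s,v_{\mc{A}})=\mu(E_{\mc{A}})$. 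So the oracle calls in line~\ref{line:takechunk} are all legitimate.

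Next I would verify the three asserted properties of the output. For $E_Q\subseteq Q$: the set $F_Q$ is always a subset of $E_{\mc{A}}$ for some $\mc{A}\in\mc{P}_0(\mc{F})$ with $Q\in\mc{A}$, and $E_{\mc{A}}\subseteq Q$ for every $Q\in\mc{A}$ by definition of $E_{\mc{A}}$; hence every chunk added to $E_Q$ lies in $Q$. For disjointness of $\cbrace{E_Q}_{Q\in\mc{F}}$: two different sets $E_Q$, $E_{Q'}$ can only pick up mass from the same $E_{\mc{A}}$ (distinct $E_{\mc{A}}$'s are disjoint), and within a fixed $E_{\mc{A}}$ the inner loop updates $F\leftarrow F\setminus F_Q$ after each assignment, so the chunks $F_Q$ taken for different $Q\in\mc{A}$ are pairwise disjoint by construction. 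For the measure bound \eqref{eq:toprovealg3}: by disjointness of the $F_Q$ chunks contributing to $E_Q$ across all $\mc{A}\ni Q$, we get $\mu(E_Q)=\sum_{\mc{A}\in\mc{P}_0(\mc{F}):Q\in\mc{A}}f(v_{\mc{A}},v_Q)=f(v_Q,t)$ by flow conservation at $v_Q$; and $f(v_Q,t)\ge\Lambda^{-1}\mu(Q)$ is precisely what was proven in Theorem~\ref{thm:phiQ} (there it followed from the fact that $f$ is a maximum flow, the max-flow min-cut theorem, and the $\Lambda$-Carleson condition applied to a cut). I would simply invoke that argument rather than repeat it.

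Finally, for the runtime: lines~\ref{line:constructGEQ} and~\ref{line:maxflowEQ} are identical to Algorithm~\ref{algorithmFindphiQ}, contributing $O(m\cdot\mathrm{EO}_1+m^2)$ and $O(\abs{V}\abs{E})=O(n^2m+nm^2)$ respectively, using $\abs{V}=O(m+n)$ and $\abs{E}=O(nm)$ as computed there. The initialization loop setting $E_Q=\emptyset$ costs $O(n)$. The main double loop runs over pairs $(\mc{A},Q)$ with $Q\in\mc{A}$, of which there are at most $\abs{E}=O(nm)$, and each iteration makes one call to \ref{O2} plus $O(1)$ set-bookkeeping operations, giving $O(nm\cdot\mathrm{EO}_2)$; combining all terms yields the stated $O(m\cdot\mathrm{EO}_1+nm\cdot\mathrm{EO}_2+n^2m+nm^2)$. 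I expect no real obstacle here—the only point requiring a moment's care is the well-definedness of the oracle calls in line~\ref{line:takechunk}, which is exactly the flow-conservation-plus-capacity observation above, and the observation that since $(\Omega,\Sigma,\mu)$ is non-atomic the oracle \ref{O2} does return such a set (non-atomicity is what makes \ref{O2} a sensible assumption, via Sierpiński's theorem on the range of non-atomic measures).
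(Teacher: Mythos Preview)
Your proposal is correct and follows essentially the same approach as the paper: verify the feasibility of line~\ref{line:takechunk} via flow conservation at $v_{\mc{A}}$ and the capacity constraint on \ref{E1}, deduce $\mu(E_Q)=f(v_Q,t)$ via flow conservation at $v_Q$, and then invoke the min-cut argument from Theorem~\ref{thm:phiQ}. The only difference is that you spell out the containment $E_Q\subseteq Q$ and the disjointness in more detail than the paper, which simply declares them ``clear''.
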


\begin{proof}
  We start by showing that line \ref{line:takechunk} of Algorithm \ref{algorithmFindEQ} is possible in every iteration of the for-loop. For this, we need to show that we always have $\mu(F) \geq f(v_{\mc{A}},v_Q)$. This follows directly from the fact that, by the flow conservation in \eqref{eq:flowsum}, we have
  $$
  \sum_{Q \in \mc{A}} f(v_{\mc{A}},v_Q) = f(s,v_{\mc{A}}) \leq c(s,v_{\mc{A}}) = \mu(E_{\mc{A}}).
  $$

  It is clear that  $E_Q \subseteq Q$ for all $Q \in \mc{F}$ and the sets $\cbrace{E_Q}_{Q \in \mc{F}}$ are disjoint, so it remains to prove \eqref{eq:toprovealg3}. By line \ref{line:updateEQ} of Algorithm \ref{algorithmFindEQ} and \eqref{eq:flowsum} we have
  $$
  \mu(E_Q) = \sum_{\mc{A} \in \mc{P}_0(\mc{F}):Q \in \mc{A}} f(v_{\mc{A}},v_Q) = f(v_Q,t).
  $$
  Exactly as in the proof of Theorem \ref{thm:phiQ}, one can now show that $f(v_Q,t) \geq \Lambda^{-1}\mu(Q)$, yielding \eqref{eq:toprovealg3}.

  Finally, the runtime analyses of lines  \ref{line:constructGEQ} and \ref{line:maxflowEQ} of Algorithm \ref{algorithmFindEQ} are the same as the runtime analysis of Algorithm \ref{algorithmFindphiQ} in the proof of Theorem \ref{thm:phiQ}. The rest of Algorithm \ref{algorithmFindEQ} consists of two nested for loops over $\mc{P}_0$ and $\mc{A}\in \mc{P}_0$ respectively. Each iteration has one oracle call to \ref{O2}, yielding a runtime of $O(nm\cdot \mathrm{EO}_2)$.
\end{proof}

\begin{example}\label{example:three-rectangle-EQ}
We continue the study of the family of rectangles $\mc{F}$ from Examples \ref{example:three-rectangles} and \ref{example:three-rectangle-algorithms}. Since Lebesgue measure is non-atomic, Algorithm \ref{algorithmFindEQ} constructs disjoint subsets $E_{Q_j}$ by partitioning each atom according to the flow $f$ described in Table \ref{tab:1}. One possible output is
\begin{align*}
E_{Q_1} &= (0,1)\times(0,\tfrac32), \\
E_{Q_2} &= (2,3)\times(\tfrac12,2), \\
E_{Q_3}
&= \bigl((1,2)\times(0,2)\bigr) 
\cup \bigl((0,3)\times(2,\tfrac52)\bigr) 
\cup \bigl((2,\tfrac52)\times(0,\tfrac12)\bigr).
\end{align*}
These sets are pairwise disjoint and
\begin{align*}
\mu(E_{Q_1})&=\tfrac32\hspace{4pt} = \tfrac12\,\mu(Q_1),\\
\mu(E_{Q_2})&=\tfrac32\hspace{4pt}= \tfrac12\,\mu(Q_{2}),\\
\mu(E_{Q_3})&=\tfrac{15}{4}= \tfrac12\,\mu(Q_3),
\end{align*}
i.e. they satisfy the conditions for $\frac12$-sparseness. 
This configuration is shown in the left panel of Figure \ref{fig:EQ-example}. Note that a part of $Q_1\cup Q_2\cup Q_3$ of positive measure remains unassigned. 

Since the maximizing subcollection $\mc{A}\subseteq \mc{F}$ for the Carleson constant of $\mc{F}$ is the full family $\mc{A} = \mc F$ (see Example \ref{example:three-rectangles}), one expects that for the optimal Carleson constant no part of positive measure will be left unassigned. This is indeed the case: running Algorithm \ref{algorithmFindEQ} with the optimal Carleson constant $\Lambda=\Lambda_{\mc F}=\tfrac95$  produces the output in the right panel of Figure \ref{fig:EQ-example}.

\begin{figure}[htbp]
  \centering
  \begin{tikzpicture}[scale=1.5, tick/.style={font=\scriptsize,black!100}]
    \draw[->,black!100] (-0.15,0) -- (3.35,0);
    \draw[->,black!100] (0,-0.15) -- (0,2.85);
    \foreach \x/\lab in {0/0,1/1,2/2,3/3}{\draw[black!100] (\x,0) -- (\x,-0.05) node[tick,below] {$\lab$};}
    \foreach \y/\lab in {0/0,1/{1},2/2}{\draw[black!100] (0,\y) -- (-0.05,\y) node[tick,left] {$\lab$};}
    \draw[fill=unusedcolor!55, draw=black] (2.5,0) rectangle (3,0.5);
    \draw[fill=unusedcolor!55, draw=black] (0,1.5) rectangle (1,2);
    \draw[fill=qone!70, draw=black] (0,0) rectangle (1,1.5);
    \draw[fill=qtwo!60, draw=black] (2,0.5) rectangle (3,2);
    \draw[fill=qthree!70, draw=black] (1,0) rectangle (2,2);
    \draw[fill=qthree!70, draw=black] (0,2) rectangle (3,2.5);
    \draw[fill=qthree!70, draw=black] (2,0) rectangle (2.5,0.5);
    \node at (0.5,0.75) {$E_{Q_1}$};
    \node at (2.5,1.25) {$E_{Q_2}$};
    \node at (1.5,1) {$E_{Q_3}$};
  \end{tikzpicture}
   \hspace{0.8cm}
  \begin{tikzpicture}[scale=1.5, tick/.style={font=\scriptsize,black!100}]
    \draw[->,black!100] (-0.15,0) -- (3.35,0);
    \draw[->,black!100] (0,-0.15) -- (0,2.85);
    \foreach \x/\lab in {0/0,1/1,2/2,3/3}{\draw[black!100] (\x,0) -- (\x,-0.05) node[tick,below] {$\lab$};}
    \foreach \y/\lab in {0/0,1/{1},2/2}{\draw[black!100] (0,\y) -- (-0.05,\y) node[tick,left] {$\lab$};}
    \draw[fill=qthree!70, draw=black] (0,0) rectangle (3,2.5);
    \draw[fill=qone!70, draw=black] (0,0) rectangle (1,1.5);
    \draw[fill=qone!70, draw=black] (1,0) rectangle (1.333,0.5);
    \draw[fill=qtwo!60, draw=black] (2,0.5) rectangle (3,2);
    \draw[fill=qtwo!60, draw=black] (1.667,1.5) rectangle (2,2);
    \draw[draw=black] (1.333,0) rectangle (2,0.5);
    \draw[draw=black] (1,1.5) rectangle (1.667,2);
    \draw[draw=black] (1,0.5) rectangle (2,1.5);
    \node at (0.5,0.75) {$E_{Q_1}$};
    \node at (2.5,1.25) {$E_{Q_2}$};
    \node at (1.5,1) {$E_{Q_3}$};
    \node at (1.5,2.25) {$E_{Q_3}$};
  \end{tikzpicture}
  \caption{Output of Algorithm \ref{algorithmFindEQ} for the collection of rectangles in Example \ref{example:three-rectangles}. The left panel corresponds to $\Lambda=2$, and the right panel to the optimal value $\Lambda=\Lambda_{\mc{F}}=\frac95$.}
  \label{fig:EQ-example}
\end{figure}
\end{example}

\subsection*{Declaration of generative AI and AI-assisted technologies in the manuscript preparation process}
During the preparation of this work the authors used ChatGPT 5.5 Extended Thinking in order to make the draft version of Figures \ref{fig:rectangles}-\ref{fig:EQ-example}. After using this tool, the authors reviewed and edited the content as needed and take full responsibility for the content of the published article.

\bibliographystyle{alpha}
\bibliography{bibliography}

\newcommand{\etalchar}[1]{$^{#1}$}
\begin{thebibliography}{ABG{\etalchar{+}}11}

\bibitem[ABG{\etalchar{+}}11]{ABGPS11}
R.~Aharoni, E.~Berger, A.~Georgakopoulos, A.~Perlstein, and P.~Spr\"{u}ssel.
\newblock The max-flow min-cut theorem for countable networks.
\newblock {\em J. Combin. Theory Ser. B}, 101(1):1--17, 2011.

\bibitem[Bar19]{Ba19}
A.~Barron.
\newblock {\em Sparse bounds in harmonic analysis and semiperiodic estimates}.
\newblock PhD thesis, Brown University, 2019.

\bibitem[Car74]{ca74}
L.~Carleson.
\newblock {\em A counter example for measures bounded on $H_p$ for the
  bi-disc}.
\newblock Report No. 7 - 1974. Institut Mittag-Leffler, 1974.

\bibitem[CO17]{CO17}
C.~Cascante and J.M. Ortega.
\newblock Two-weight norm inequalities for vector-valued operators.
\newblock {\em Mathematika}, 63(1):72--91, 2017.

\bibitem[Din70]{Di70}
E.A. Dinic.
\newblock Algorithm for solution of a problem of maximum flow in networks with
  power estimation.
\newblock In {\em Soviet Math. Doklady}, volume~11, pages 1277--1280, 1970.

\bibitem[Dor75]{Do75}
L.E. Dor.
\newblock On projections in {$L_{1}$}.
\newblock {\em Ann. of Math. (2)}, 102(3):463--474, 1975.

\bibitem[EK72]{EK72}
J.~Edmonds and R.M. Karp.
\newblock Theoretical improvements in algorithmic efficiency for network flow
  problems.
\newblock {\em Journal of the ACM (JACM)}, 19(2):248--264, 1972.

\bibitem[FF56]{FF56}
L.R. Ford and D.R. Fulkerson.
\newblock Maximal flow through a network.
\newblock {\em Canad. J. Math}, 8:399--404, 1956.

\bibitem[H{\"{a}}n18]{Haen18}
T.S. H{\"{a}}nninen.
\newblock Equivalence of sparse and {C}arleson coefficients for general sets.
\newblock {\em Ark. Mat.}, 56(2):333--339, 2018.

\bibitem[Hon24]{Hon24}
E.A. Honig.
\newblock A constructive algorithm to prove the equivalence of the {C}arleson
  and sparse condition.
\newblock Bachelor's thesis, Delft University of Technology, Delft, the
  Netherlands, 2024.

\bibitem[Iwa08]{Iw08}
S.~Iwata.
\newblock Submodular function minimization.
\newblock {\em Math. Program.}, 112(1, Ser. B):45--64, 2008.

\bibitem[Jia23]{Ji22}
H.~Jiang.
\newblock Minimizing convex functions with rational minimizers.
\newblock {\em J. ACM}, 70(1):Art. 5, 27, 2023.

\bibitem[JLSZ24]{JLSZ24}
H.~Jiang, Y.T. Lee, Z.~Song, and L.~Zhang.
\newblock Convex minimization with integer minima in {$\tilde{O}(n^4)$} time.
\newblock In {\em Proceedings of the 2024 {A}nnual {ACM}-{SIAM} {S}ymposium on
  {D}iscrete {A}lgorithms ({SODA})}, pages 3659--3684. SIAM, Philadelphia, PA,
  2024.

\bibitem[LN19]{LN15}
A.K. Lerner and F.~Nazarov.
\newblock Intuitive dyadic calculus: {T}he basics.
\newblock {\em Expo. Math.}, 37(3):225--265, 2019.

\bibitem[Loc22]{Loc22}
A.~Lochbihler.
\newblock A mechanized proof of the max-flow min-cut theorem for countable
  networks with applications to probability theory.
\newblock {\em J. Automat. Reason.}, 66(4):585--610, 2022.

\bibitem[McC05]{mc05}
S.T. McCormick.
\newblock Submodular function minimization.
\newblock In K.~Aardal, G.L. Nemhauser, and R.~Weismantel, editors, {\em
  Discrete Optimization}, volume~12 of {\em Handbooks in Operations Research
  and Management Science}, pages 321--391. Elsevier, 2005.

\bibitem[Orl13]{Or13}
J.B. Orlin.
\newblock Max flows in {$O(nm)$} time, or better.
\newblock In {\em S{TOC}'13---{P}roceedings of the 2013 {ACM} {S}ymposium on
  {T}heory of {C}omputing}, pages 765--774. ACM, New York, 2013.

\bibitem[Per19]{Pe19c}
M.C. Pereyra.
\newblock Dyadic harmonic analysis and weighted inequalities: the sparse
  revolution.
\newblock In {\em New trends in applied harmonic analysis. {V}ol. 2}, Appl.
  Numer. Harmon. Anal., pages 159--239. Birkh\"{a}user/Springer, Cham, 2019.

\bibitem[Rey24]{Re22}
G.~Rey.
\newblock Greedy approximation algorithms for sparse collections.
\newblock {\em Publ. Mat.}, 68(1):251--265, 2024.

\bibitem[Ver96]{Ve96}
I.E. Verbitsky.
\newblock Imbedding and multiplier theorems for discrete {L}ittlewood-{P}aley
  spaces.
\newblock {\em Pacific J. Math.}, 176(2):529--556, 1996.

\end{thebibliography}

\end{document}